\newcommand{\Xset}{\ensuremath{\mathsf{X}}}
\newcommand{\Xsigma}{\mathcal{B}(\Xset)}
\newcommand{\Yset}{\ensuremath{\mathsf{Y}}}
\newcommand{\Ysigma}{\mathcal{B}{\Yset}}
\newcommand{\Zset}{\ensuremath{\mathsf{Z}}}
\newcommand{\Xproc}{\ensuremath{\{X_k\}_{k\geq 0}}}
\newcommand{\Uproc}{\ensuremath{\{U_k\}_{k\geq 0}}}
\newcommand{\Vproc}{\ensuremath{\{V_k\}_{k\geq 0}}}
\newcommand{\XYproc}{\ensuremath{\{X_k,Y_k\}_{k\geq 0}}}
\newcommand{\Yproc}{\ensuremath{\{Y_k\}_{k\geq 0}}}
\newcommand{\forvar}[2][]%
{%
\ifthenelse{\equal{#1}{}}{\ensuremath{\alpha_{#2}}}{\ensuremath{\alpha_{#1,#2}}}%
}
\newcommand{\mcb}[1]{\ensuremath{\mathcal{F}_{\mathrm{b}}\left(#1\right)}}
\newcommand{\diam}{\mathrm{diam}}
\newcommand{\as}{a.s.}
\newcommand{\backvar}[2]{\ensuremath{\beta_{#1|#2}}}
\newcommand{\PP}{\ensuremath{\operatorname{P}}}
\newcommand{\PE}{\ensuremath{\operatorname{E}}}
\newcommand{\CPE}[3][]
{\ifthenelse{\equal{#1}{}}{\operatorname{E}\left[\left. #2 \, \right| #3 \right]}{\operatorname{E}_{#1}\left[\left. #2 \, \right | #3 \right]}}
\newcommand{\CPEs}[3][]
{\ifthenelse{\equal{#1}{}}{\operatorname{E}^\star\left[\left. #2 \, \right| #3 \right]}{\operatorname{E}^\star_{#1}\left[\left. #2 \, \right | #3 \right]}}
\newcommand{\CPP}[3][]
{\ifthenelse{\equal{#1}{}}{\operatorname{P}\left(\left. #2 \, \right| #3 \right)}{\operatorname{P}_{#1}\left(\left. #2 \, \right | #3 \right)}}
\newcommand{\CPPs}[3][]
{\ifthenelse{\equal{#1}{}}{\operatorname{P}^\star \left(\left. #2 \, \right| #3 \right)}{\operatorname{P}^\star_{#1}\left(\left. #2 \, \right | #3 \right)}}
\newcommand{\cpp}[3][]
{\ifthenelse{\equal{#1}{}}{\operatorname{p}\left(\left. #2 \, \right| #3 \right)}{\operatorname{p}_{#1}\left(\left. #2 \, \right | #3 \right)}}
\newcommand{\PVar}{\ensuremath{\operatorname{Var}}}
\newcommand{\PCov}{\ensuremath{\operatorname{Cov}}}
\newcommand{\rmd}{\ensuremath{\mathrm{d}}}
\newcommand{\rme}{\ensuremath{\mathrm{e}}}
\newcommand{\tvnorm}[1]{\ensuremath{\left\|#1\right\|_{\mathrm{TV}}}}
\newcommand{\Nset}{\mathbb{N}}
\newcommand{\rset}{\mathbb{R}}
\renewcommand{\epsilon}{\varepsilon}
\newcommand{\chunk}[4][]%
{\ifthenelse{\equal{#1}{}}{\ensuremath{{#2}_{#3:#4}}}{\ensuremath{#2^#1}_{#3:#4}}
}
\newcommand{\eqsp}{\;}
\newcommand{\eqdef}{\ensuremath{\stackrel{\mathrm{def}}{=}}}
\newcommand{\1}{\ensuremath{\mathbbm{1}}}
\newcommand\W[3]{W[#1](#2;#3)}
\newcommand{\filt}[2][]%
{%
\ifthenelse{\equal{#1}{}}{\ensuremath{\phi_{#2}}}{\ensuremath{\phi_{#1,#2}}}%
}
\newcommand{\pred}[3][]%
{%
\ifthenelse{\equal{#1}{}}{\ensuremath{\phi_{#2|#3}}}{\ensuremath{\phi_{#1,#2|#3}}}%
}
\newcommand{\post}[3][]%
{%
\ifthenelse{\equal{#1}{}}{\ensuremath{\phi_{#2|#3}}}{\ensuremath{\phi_{#1,#2|#3}}}%
}
\newcommand{\FK}[2]{\ensuremath{\mathrm{F}_{#1|#2}}}
\newcommand{\fk}[2]{\ensuremath{\mathrm{f}_{#1|#2}}}
\newcommand{\bFK}[2]{\ensuremath{\bar{\mathrm{F}}_{#1|#2}}}
\newcommand{\RFK}[2]{\ensuremath{\bar{\mathrm{R}}_{#1|#2}}}
\newcommand{\CS}[2]{\ensuremath{\bar{C}_{#1|#2}}}
\def\marg#1{}
\def\boxit#1#2#3{\hbox{\vrule width #2               
                     \vtop{%
                             \vbox{\hrule  height #2 \kern#1 
                           \hbox{\kern#1 #3\kern#1}  
                          }%
                     \kern#1 \hrule  height #2   
                  }%
                         \vrule width #2             
                      } %
                  }
\newcommand{\Xinit}{\ensuremath{\xi}}
\def\eps{\ensuremath{\epsilon}}
\def\iid{i.i.d.\ }
\def\al{\ensuremath{\alpha}}
\def\en{\infty}
\newlength{\deno}  
\newlength{\nal}   \newlength{\naal}
\def\lm{\lambda}
\newcommand{\convs}[1]{\stackrel{#1}{\rightarrow}}
\newcommand{\convl}[1]{\stackrel{#1}{\longrightarrow}}
\newcommand{\convll}[2][\null]{
  \@ifundefined{bigaw@}{\newdimen\bigaw@}{}
    \setboxz@h{$\m@th\scriptstyle\;{#2}\;\;$}%
  \ifdim\wdz@>\bigaw@ pt\global\bigaw@\wdz@\fi
  \@ifnotempty{#1}{\setbox\@ne\hbox{$\m@th\scriptstyle\;{#1}\;\;$}%
    \ifdim\wd\@ne>\bigaw@\global\bigaw@\wd\@ne\fi}%
    \mathrel{\mathop{\hbox to\bigaw@{\rightarrowfill@\displaystyle}}%
     \limits^{#2}\@ifnotempty{#1}{_{#1}}}%
    }
\newcommand{\conv}[2][abcdefg]{%
    \setbox\ewa\hbox{$#1$}     \setbox\ewb\hbox{$#2$}%
    \setlength{\deno}{\wd\ewb}
    \ifthenelse{\equal{#1}{abcdefg}}
        {\setbox\ert\hbox{$a$} \setbox\prt\hbox{$aa$}%
        \setlength{\nal}{\wd\ert}
        \setlength{\naal}{\wd\prt}
        \ifthenelse{\lengthtest{\deno<\nal}}%
            {\convs{#2}}%
            {\ifthenelse{\lengthtest{\deno>\naal}}%
                {\convll{#2}}%
                {\convl{#2}}%
            }
        }%
    {\convll[#1]{#2}}%
    }
\def\cas{\convl{\rm a.s.}{}}
\newcommand{\eqsplit}[2][*]%
  {\ifthenelse{\equal{#1}{*} \or\equal{#1}{<???>}
                     \or\equal{#1}{nn} \or\equal{#1}{ } \or\equal{#1}{}}
    { \begin{align*}%
         #2 %
         \end{align*}%
        }
    {\begin{equation}\label{#1}\begin{split}\allowdisplaybreaks%
         #2%
         \end{split}\end{equation}
        }
  }
\newcounter{hyp}
\def\mcf{\mathcal{F}}
\def\ie{\textit{i.e., }}
\newcommand{\eg}{\textit{e.g.,}}
\newcommand{\wrt}{with respect to}
\newtheorem{thm}{Theorem}
\newtheorem{lemma}[thm]{Lemma}
\newtheorem{defi}{Definition}
\newtheorem{prop}[thm]{Proposition}
\newtheorem{cor}[thm]{Corollary}
\newtheorem{example}{Example}
\theoremstyle{remark}
\newtheorem{remark}{Remark}
\begin{document}
\begin{frontmatter}
\title{Forgetting of the initial condition for the filter in general state-space hidden Markov chain: a coupling approach}

\begin{aug}
\author{Randal Douc \ead[label=e1]{douc@cmapx.polytechnique.fr}}
\address{GET/Télécom INT, \\
France, \\ \printead{e1}}

\author{Eric Moulines \ead[label=e2]{moulines@tsi.enst.fr}}
\address{GET/Télécom Paris, \\ 46 rue Barrault,
75634 Paris Cédex 13, France, \\ \printead{e2}}

\author{Ya'acov Ritov \ead[label=e3]{yaacov.ritov@huji.ac.il}}
\address{Department of Statistics, The Hebrew University of Jerusalem, \\ \printead{e3}}

\end{aug}
\begin{keyword}[class=AMS]
\kwd[Primary ]{93E11}
\kwd{hidden Markov chain, stability, non-linear filtering}
\kwd[; secondary ]{60J57}
\end{keyword}
\end{frontmatter}

\begin{abstract}
We give simple conditions that ensure exponential forgetting of the initial conditions of the filter
for general state-space hidden Markov chain.
The proofs are based on the coupling argument applied to the posterior Markov kernels.
These results are useful both for filtering hidden Markov models using approximation methods (e.g., particle filters)
and for proving asymptotic properties of estimators.
The results are general enough to cover models \emph{like} the Gaussian state space model,
without using the special structure that permits the application of the Kalman filter.
\end{abstract}

\maketitle
\section{Introduction and Notation}
We consider the filtering problem for a Markov chain $\XYproc$ with \emph{state} $X$ and
\emph{observation} $Y$.
The state process  $\Xproc$ is an homogeneous Markov chain taking value
in a measurable set $\Xset$ equipped with a $\sigma$-algebra $\Xsigma$. We let $Q$ be the transition kernel of the
chain.   The observations  $\Yproc$ takes
values in a measurable set $\Yset$ ($\Ysigma$ is the associated $\sigma$-algebra). For $i\leq j$, denote
$\chunk{Y}{i}{j} \triangleq(Y_i,Y_{i+1},\cdots,Y_j)$. Similar notation will
be used for other sequences. We assume furthermore. that  for each $k\geq 1$ and given $X_k$, $Y_k$ is independent of
$\chunk{X}{1}{k-1}$,$\chunk{X}{k+1}{\infty}$, $\chunk{Y}{1}{k-1}$, and
$\chunk{Y}{k+1}{\infty}$. We also assume that for each $x \in \Xset$, the conditional law has a density $g(x,\cdot)$
with respect to some fixed $\sigma$-finite measure  on the Borel $\sigma$-field $\mathcal{B(Y)}$.

We denote by $\filt[\Xinit]{n}[\chunk{y}{0}{n}]$
the distribution of the hidden state $X_n$ conditionally on the observations
$\chunk{y}{0}{n} \eqdef [y_0, \dots, y_n]$, which is given by
\begin{equation}
  \label{eq:filtering-distribution-1}
  \filt[\Xinit]{n}[\chunk{y}{0}{n}](A)  = \frac{\int_{\Xset^{n+1}} \Xinit(dx_0) g(x_0,y_0) \prod_{i=1}^n
    Q(x_{i-1},dx_i) g(x_i,y_i) \1_A(x_n)}{\int_{\Xset^{n+1}} \Xinit(dx_0)
    g(x_0,y_0) \prod_{i=1}^n Q(x_{i-1},dx_i) g(x_i,y_i)} \eqsp,
\end{equation}

In practice the model is rarely known exactly and therefore suboptimal filters are
computed by replacing the unknown transition kernel, likelihood function and
initial distribution by approximations.

The choice of these quantities plays a key role both when studying the
convergence of sequential Monte Carlo methods or when analysing the asymptotic
behaviour of the maximum likelihood estimator (see \eg\ \cite{delmoral:2004} or
\cite{cappe:moulines:ryden:2005} and the references therein).
A key point when analyzing maximum likelihood estimator or the  stability
of the filter over infinite horizon is to ask whether $\filt[\Xinit]{n}[\chunk{y}{0}{n}]$ and
$\filt[\Xinit']{n}[\chunk{y}{0}{n}]$ are close (in some sense) for large values
of $n$, and two different choices of the initial distribution $\Xinit$ and
$\Xinit'$.

The forgetting property of the initial condition of the optimal filter in
nonlinear state space models has attracted many research efforts and it is impossible
to give credit to every contributors. The purpose of the
short presentation of the existing results below is mainly to allow comparison
of assumptions and results presented in this contributions \wrt\ those
previously reported in the literature. The first result in this direction has
been obtained by \cite{ocone:pardoux:1996}, who established $L_p$-type
convergence of the optimal filter initialised with the wrong initial condition
to the filter initialised with the true initial distribution; their proof does not provide  rate of
convergence.  A new approach based on the Hilbert projective metric has later
been introduced in \cite{atar:zeitouni:1997} to establish the exponential
stability of the optimal filter \wrt\ its initial condition. However their
results are based on stringent \emph{mixing} conditions for the transition
kernels; these conditions state that there exist positive constants
$\epsilon_-$ and $\epsilon_+$ and a probability measure $\lambda$ on
$(\Xset,\Xsigma)$ such that for $f \in \mathbb{B}_+(\Xset)$,
\begin{equation}
\label{eq:mixing-condition}
\epsilon_- \lambda(f) \leq Q(x,f) \leq \epsilon_+ \lambda(f) \eqsp, \quad \text{for any $x \in \Xset$} \eqsp.
\end{equation}
This condition implies in particular  that the chain is uniformly geometrically
ergodic.  Similar results were obtained independently by
\cite{delmoral:guionnet:1998} using the Dobrushin ergodicity coefficient (see
\cite{delmoral:ledoux:miclo:2003} for further refinements of this result).
The mixing condition has later been weakened by
\cite{chigansky:lipster:2004}, under the assumption that the kernel $Q$ is
positive recurrent and is dominated by some reference measure $\lambda$:
\[
\sup_{(x,x') \in \Xset \times \Xset} q(x,x') < \infty \quad \text{and} \quad \int \mathrm{ess inf} q(x,x') \pi(x) \lambda(dx) > 0 \eqsp,
\]
where $q(x,\cdot)= \frac{d Q(x,\cdot)}{d\lambda}$, $\mathrm{ess inf}$ is the essential infimum \wrt\ $\lambda$ and
$\pi d \lambda$ is the stationary distribution of the chain $Q$ . Although the upper
bound is reasonable, the lower bound is restrictive in many applications and
fails to be satisfied \eg\ for the linear state space Gaussian model.

In \cite{legland:oudjane:2003}, the stability of the optimal filter is studied
for a class of kernels referred to as \emph{pseudo-mixing}. The definition of
pseudo-mixing kernel is adapted to the case where the state space is $\Xset=
\rset^d$, equipped with the Borel sigma-field $\Xsigma$.  A kernel $Q$ on
$(\Xset,\Xsigma)$ is \emph{pseudo-mixing} if for any compact set $C$ with a
diameter $d$ large enough, there exist positive constants $\epsilon_-(d) >0$
and $\epsilon_+(d) > 0$ and a  measure $\lambda_C$ (which may
be chosen to be finite without loss of generality) such that
\begin{equation}
\label{eq:pseudo-mixing-kernel}
\epsilon_-(d) \lambda_C(A)\leq Q(x,A)\leq \epsilon_+(d) \lambda_C(A) \eqsp, \quad \text{for any $x \in C$, $A \in \Xsigma$}
\end{equation}
This condition implies that for any $(x',x'') \in C \times C$,
$$
\frac{\epsilon_-(d)}{\epsilon_+(d)} < \mathrm{essinf}_{x \in \Xset} q(x',x)/q(x'',x)\leq  \mathrm{esssup}_{x \in \Xset} q(x',x)/q(x'',x) \leq \frac{\epsilon_+(d)}{\epsilon_-(d)} \eqsp,
$$
where $q(x,\cdot) \eqdef d Q(x,\cdot)/ d \lambda_C$, and $\mathrm{esssup}$ and $\mathrm{essinf}$
denote the essential supremum and infimum \wrt\ $\lambda_C$.  This
condition is obviously more general than \eqref{eq:mixing-condition}, but still
it is not satisfied in the linear Gaussian case (see \cite[Example
4.3]{legland:oudjane:2003}).

Several attempts have been made to establish the stability conditions under the
so-called \emph{small} noise condition. The first result in this direction has
been obtained by \cite{atar:zeitouni:1997} (in continuous time) who considered
an ergodic diffusion process with constant diffusion coefficient and linear
observations: when the variance of the observation noise is sufficiently small,
\cite{atar:zeitouni:1997} established that the filter is exponentially stable.
Small noise conditions also appeared (in a discrete time setting) in
\cite{budhiraja:ocone:1999} and \cite{oudjane:rubenthaler:2005}. These results
do not allow to consider the linear Gaussian state space model with arbitrary
noise variance.

More recently, \cite{chigansky:lipster:2006}  prove that the nonlinear filter forgets its initial condition in mean over the observations
for functions satisfying some integrability conditions.
The main result presented in this paper relies on the martingale convergence theorem rather than
direct analysis of filtering equations. Unfortunately, this method of proof cannot provide any rate
of convergence.

It is tempting to assume that forgetting of the initial condition should be true in general,
and that the lack of proofs for the general state-space case  is only a matter of technicalities.
The heuristic argument says that either
\begin{itemize}
\item the observations $Y$'s are informative, and we learn about the hidden state $X$
from the $Y$s around it, and forget the initial starting point.
\item the observations $Y$s are non-informative, and then the $X$ chain is moving by itself, and by itself it forgets its initial
condition, for example if it is positive recurrent.
\end{itemize}
Since we expect that the forgetting of the initial condition is retained in these two extreme cases,
it is probably so under any condition. However, this argument is false, as is shown by the following examples where the conditional chain does not
forget its initial condition whereas the unconditional chain does.
On the other hand, it can be that observed process, $\Yproc$ is not ergodic, while the conditional chain uniformly forgets the initial condition.
\begin{example}
Suppose that $\Xproc$ are \iid\ $B(1,1/2)$. Suppose $Y_i=\1(X_i =
X_{i-1})$. Then $\CPP{X_i=1}{X_0=0,\chunk{Y}{0}{n}}=1-\CPP{X_i=1}{X_1=1,\chunk{Y}{0}{n}} \in \{0,1\}$.

Here is a slightly less extreme example. Consider a Markov chain on the unit circle. All values below
are considered modulus $2\pi$. We assume that $X_i=X_{i-1}+U_i$,
where the state noise $\{U_k\}_{k \geq 0}$ are \iid. The
chain is hidden by additive white noise: $Y_i=X_i+\eps_i$, $\eps_i=\pi W_i + V_i$, where $W_i$ is Bernoulli random variable independent of $V_i$.
Suppose now that $U_i$ and $V_i$ are symmetric and supported on some small interval.
The hidden chain does not forget its initial distribution under this model. In fact the support of the
distribution of $X_i$ given $\chunk{Y}{0}{n}$ and $X_0=x_0$ is disjoint
from the support of its distribution given $\chunk{Y}{0}{n}$ and
$X_0=x_0+\pi$.

On the other hand, let $\Yproc$ be an arbitrary process.
Suppose it is modeled (incorrectly!) by a  autoregressive process observed in additive noise.
We will show that under different assumptions on the distribution of the state and the observation noise,
the conditional chain (given the observations $Y$s which are not necessarily generated by the model)
forgets its initial condition geometrically fast.
\end{example}

The proofs presented in this paper  are based on generalization of the notion of small sets and coupling of the two (non-homogenous)
Markov chains sampled from the distribution of $\chunk{X}{0}{n}$ given $\chunk{Y}{0}{n}$.
The coupling argument is based on presenting two chains $\{X_k\}$ and $\{X'_k\}$, which marginally follow the same sequence of transition kernels,
but have different initial distributions of the starting state. The chains move independently, until they \emph{coupled} at a random time $T$,
and from that time on they remain equal.

Roughly speaking, the two copies of the chain may couple at a time $k$
if they stand close one to the other. Formally, we mean by that, that the the pair of states of the two chains at time $k$ belong to some set,
which may depend of the current, but also past and future observations.
The novelty of the current paper is by considering sets which are in fact genuinely defined by the pair of states.
For example, the set can be defined as $\{(x,x'):\;\|x-x'\|<c\}$. That is, close in the usual sense of the word.

The prototypical example we use is the non-linear state space model:
 \eqsplit[eq:stateSpaceModel]{
    X_i&=a(X_{i-1}) + U_i
    \\
    Y_i&= b(X_i) + V_i ,
  }
where $\Uproc$ is the \emph{state noise} and $\Vproc$ is the \emph{measurement noise}. Both $\Uproc$ and $\Vproc$ are assumed to be \iid\ and
 mutually independent. Of course, the filtering problem for the linear version of this model with independent Gaussian noise
is solved explicitly by the Kalman filter. But this is one of the few non-trivial models which admits a simple solution.
Under the Gaussian linear model, we argue that whatever are $\chunk{Y}{0}{n}$, two independent chains drawn from the conditional distribution
will be remain close to each other even if the $Y$s are drifting away. Any time they will be close, they will be able to couple, and this will
happen quite frequently.

Our approach for proving that a chain forgets its initial conditions can be decomposed in two stages.
We first argue that there are \emph{coupling sets} (which may depend on the observations, and may also vary according to the iteration index)
where we can couple two copies of the chains, drawn independently from the conditional distribution given the observations and started
from two different initial conditions,  with a probability which is an explicit function of the observations.
We then argue that a pair of chains are likely to drift frequently towards these coupling sets.

The first group of results identify situations in which the coupling set is given in a product form,
and in particular in situations where $\Xset \times \Xset$ is a coupling set.
In the typical situation,  many values of $Y_i$ entail that  $X_i$ is in some set with high probability,
and hence  the two conditionally independent copies are likely to be in this set and close to each other.
In particular, this enables us to prove the convergence of (nonlinear) state space processes with bounded noise and,
more generally, in situations where the tails of the observations error is thinner than those of dynamics innovations.

The second argument generalizes the standard drift condition to the coupling set.
The general argument specialized to the linear-Gaussian state model is surprisingly simple.
We generalize this argument to the linear model where both the dynamics innovations and the measurement errors have strongly unimodal density.

\section{Notations and definitions}
Let $n$ be a given positive index and consider the finite-dimensional
distributions of $\Xproc$ given $\chunk{Y}{0}{n}$. It is well known (see \cite[Chapter 3]{cappe:moulines:ryden:2005})
that, for any positive index $k$, the distribution of $X_k$ given $\chunk{X}{0}{k-1}$ and $\chunk{Y}{0}{n}$
reduces to that of $X_k$ given $X_{k-1}$ only and $\chunk{Y}{0}{n}$. The following definitions will be instrumental in
decomposing the joint posterior distributions.

\begin{defi}[Backward functions]
For $k \in \{0, \dots, n\}$, the backward function
$\backvar{k}{n}$ is the non-negative measurable function on $\Yset^{n-k} \times \Xset$ defined by
\begin{multline}
  \backvar{k}{n}(\chunk{y}{k+1}{n},x) = \\
  \idotsint Q(x,dx_{k+1}) g(x_{k+1},y_{k+1}) \prod_{l={k+2}}^n
  Q(x_{l-1},dx_l) g(x_l,y_l) \eqsp ,
\label{eq:def:beta:kernel}
\end{multline}
for $k \leq n-1$ (with the same convention that the rightmost product is empty
for $k=n-1$); $\backvar{n}{n}(\cdot)$ is set to the constant function equal
to 1 on $\Xset$.
\end{defi}
The term ``backward variables''  is part of the HMM credo and dates back to the seminal
work of Baum and his colleagues~\cite[p.~168]{baum:petrie:soules:weiss:1970}.
The backward functions  may be obtained, for all $x \in\Xset$ by the recursion
\begin{equation}
\backvar{k}{n}(x) = \int Q(x,dx') g(x',y_{k+1}) \backvar{k+1}{n}(x') \eqsp
\label{eq:rec:beta}
\end{equation}
operating on decreasing indices $k = n-1$ down to $0$ from the initial
condition
\begin{equation}
\backvar{n}{n}(x) = 1 \eqsp.
\label{eq:def:beta:init}
\end{equation}
\begin{defi}[Forward Smoothing Kernels]
  \label{defi:forward_kernel}
  Given $n\geq 0$, define for indices $k \in \{0, \dotsc, n-1\}$ the transition
kernels
\begin{equation}
\FK{k}{n}(x,A) \eqdef
\begin{cases}
[\backvar{k}{n}(x)]^{-1} \int_A Q(x,dx') g(x',y_{k+1}) \backvar{k+1}{n}(x') & \text{if $\backvar{k}{n}(x) \neq 0$} \\
0 & \text{otherwise} \eqsp ,
\end{cases}
\label{eq:smoothing:forward_kernel}
\end{equation}
for any point $x \in \Xset$ and set $A \in \Xsigma$. For indices $k \geq n$,
simply set
\begin{equation}
\FK{k}{n} \eqdef Q \eqsp ,
\label{eq:smoothing:forward:kernel:after}
\end{equation}
where $Q$ is the transition kernel of the unobservable chain $\Xproc$.
\end{defi}

Note that for indices $k \leq n-1$, $\FK{k}{n}$ depends on the future
observations $\chunk{Y}{k+1}{n}$ through the backward variables
$\backvar{k}{n}$ and $\backvar{k+1}{n}$ only.
The subscript $n$ in the $\FK{k}{n}$
notation is meant to underline the fact that, like the backward functions
$\backvar{k}{n}$, the forward smoothing kernels $\FK{k}{n}$ depend on the final
index $n$ where the observation sequence ends.
Thus, for any $x \in \Xset$, $A \mapsto \FK{k}{n}(x,A)$ is a probability
measure on $\Xsigma$. Because the functions $x \mapsto \backvar{k}{n}(x)$ are
measurable on $(\Xset,\Xsigma)$, for any set $A \in \Xsigma$, $x \mapsto
\FK{k}{n}(x,A)$ is $\Xsigma/\mathcal{B}(\rset)$-measurable.  Therefore,
$\FK{k}{n}$ is indeed a Markov transition kernel on $(\Xset,\Xsigma)$.

Given $n$, for any index $k \geq 0$ and function $f \in \mcb{\Xset}$,
\begin{equation*}
\PE_\Xinit [ f(X_{k+1}) \mid \chunk{X}{0}{k},\chunk{Y}{0}{n} ] = \FK{k}{n}(X_k,f) \eqsp.
\end{equation*}
More generally, For any integers $n$ and $m$, function $f \in \mcb{\Xset^{m+1}}$ and initial probability $\Xinit$ on $(\Xset,\Xsigma)$,
\begin{equation}
\PE_\Xinit [ f( \chunk{X}{0}{m}) \mid \chunk{Y}{0}{n} ] =
 \idotsint   f(\chunk{x}{0}{m}) \: \post[\Xinit]{0}{n}(dx_0) \prod_{i=1}^m \FK{i-1}{n}(x_{i-1},dx_i) \eqsp ,
\label{eq:smoothing:forw_decomp}
\end{equation}
where $\{ \FK{k}{n}\}_{k \geq 0}$ are defined
by~\eqref{eq:smoothing:forward_kernel} and
\eqref{eq:smoothing:forward:kernel:after} and $\post[\Xinit]{k}{n}$ is the
marginal smoothing distribution of the state $X_k$ given the observations $\chunk{Y}{0}{n}$. Note that $\post[\Xinit]{k}{n}$ may be expressed, for any $A \in \Xsigma$, as
\begin{equation}
   \post[\Xinit]{k}{n}(A) = \left[\int \filt[\Xinit]{k}(dx) \backvar{k}{n}(x)\right]^{-1} \int_A \filt[\Xinit]{k}(dx)  \backvar{k}{n}(x)  \eqsp,
\label{eq:smoothing:marginal-posterior}
\end{equation}
where $\filt[\Xinit]{k}$ is the filtering distribution defined in \eqref{eq:filtering-distribution-1} and $\backvar{k}{n}$ is the backward function.

\section{The coupling construction and coupling sets}
\subsection{Coupling constant and the coupling construction}
As outlined in the introduction, our proofs are based on  coupling two copies of the conditional chain started from two different initial conditions.
For any two probability measures $\mu_1$ and $\mu_2$ we define
the total variation distance
$\tvnorm{\mu_1-\mu_2}=\sup_A|\mu_1(A)-\mu_2(A)|$ and we
also recall the identities $\sup_{|f|\leq 1}|\mu(f)|=2 \tvnorm{\mu_1-\mu_2}$ and
$\sup_{0\leq f\leq 1}|\mu(f)|= \tvnorm{\mu_1-\mu_2}$.
Let $n$ and $m$  be integers, and let $k \in \{0, \dots, n-m\}$.
Define the $m$-skeleton of the forward smoothing kernel as follows:
\begin{equation}
\label{eq:definition-forward-smoothing-product}
\FK{k,m}{n} \eqdef \FK{km+m-1}{n}\dots\FK{km}{n} \eqsp,
\end{equation}

\begin{defi}[Coupling constant of a set]
\label{def:coupling-constant}
Let $n$ and $m$ be integers, and let $k \in \{0, \dots, n-m\}$. The \emph{coupling constant} of the set $C \subset \Xset \times \Xset$
is  defined as
\begin{equation}
\label{eq:definition-coupling-constant}
\eps_{k,m|n}(C)\eqdef 1 - \frac{1}{2} \sup_{(x,x') \in C} \tvnorm{\FK{k,m}{n}(x,\cdot)-\FK{k,m}{n}(x',\cdot)} \eqsp.
\end{equation}
\end{defi}
The definition of the coupling constant implies that, for any $(x,x') \in C$,
\begin{equation}
\label{eq:coupling-set-definition}
\FK{k,m}{n}(x,A)\wedge \FK{k,m}{n}(x',A)\geq \eps_{k,m|n}(C) \nu^C_{k,m|n}(x,x';A) \eqsp.
\end{equation}
where
\begin{equation}
\label{eq:minorizing-probability}
\nu_{k,m|n}^C(x,x';A) = \frac{(\FK{k,m}{n}(x,\cdot) \wedge \FK{k,m}{n}(x',\cdot))(A)}{(\FK{k,m}{n}(x,\cdot) \wedge \FK{k,m}{n}(x',\cdot))(\Xset)},
\end{equation}
where for any measures $\mu$ and $\nu$ on $(\Xset,\Xsigma)$, $\mu \wedge \nu$ is the largest measure for which $(\mu \wedge \nu)(A) \geq \min(\mu(A),\nu(A))$, for all $A \in \Xsigma$.

We may now proceed to the coupling construction. Let $n$ be an integer, and for any $k \in \{0,\dots,\lfloor n / m \rfloor \}$,
let $\CS{k}{n}$ be a set-valued function, $\CS{k}{n}: \Yset^n \to \Xsigma \otimes \Xsigma$, where $\Xsigma \otimes \Xsigma$ is
the smallest $\sigma$-algebra containing the sets $A \times B$ with $A,B \in \Xsigma$.
We define $\RFK{k}{n}$ as the Markov transition kernel satisfying, for all $(x,x') \in \CS{k}{n}$ and for
all $A \in \Xsigma$ and $(x,x') \in \CS{k}{n}$,
\begin{multline}
\label{eq:definition-residual}
\RFK{k,m}{n}(x,x';A \times A') = \left\{ (1-\eps_{k,m|n})^{-1} (\FK{k,m}{n}(x,A) - \eps_{k,m|n} \nu_{k,m|n}(x,x';A)) \right\} \\ \times
\left\{ (1-\eps_{k,m|n})^{-1} (\FK{k,m}{n}(x',A') - \eps_{k,m|n} \nu_{k,m|n}(x,x';A')) \right\} \eqsp,
\end{multline}
where we have omitted the dependence upon the set $\CS{k}{n}$ in the definition of the coupling constant $\eps_{k,m|n}$ and of
the minorizing probability $\nu_{k,n|m}$.
For all $(x,x') \not \in \Xset \times \Xset$, we define
\begin{equation}
\label{eq:definition-product-kernel}
\bFK{k,m}{n}(x,x';\cdot)= \FK{k,m}{n}\otimes \FK{k,m}{n}(x,x';\cdot) \eqsp,
\end{equation}
where, for two kernels $K$ and $L$ on $\Xset$,   $K \otimes L$  is the tensor product of the kernels $K$ and $L$, \ie\
for all $(x,x') \in \Xset \times \Xset$ and $A,A' \in \mathcal{B}(\Xset)$
\begin{equation}
\label{eq:tensor-product-kernel}
K \otimes L(x,x';A\times A')= K(x,A) L(x',A') \eqsp.
\end{equation}
Define the product space $\Zset =\Xset \times \Xset \times \{ 0, 1\}$,
and the associated product sigma-algebra $\mathcal{B}(\Zset)$.  Define on the space
$(\Zset^\Nset, \mathcal{B}(\Zset)^{\otimes \Nset})$ a Markov chain $Z_i \eqdef (\tilde{X}_i, \tilde{X}'_i, d_i)$, $i \in \{0, \dots, n\}$ as follows.
If $d_i = 1$, then draw $\tilde{X}_{i+1} \sim \FK{i,m}{n}(\tilde{X}_i,\cdot)$, and set $\tilde{X}'_{i+1}=\tilde{X}_{i+1}$ and $d_{i+1} = 1$. Otherwise, if
$(\tilde{X}_i,\tilde{X}'_i) \in \CS{i}{n}$, flip a coin with probability of heads
$\eps_{i,m|n}$. If the coin comes up head, then draw $\tilde{X}_{i+1}$ from  $\nu_{i,m|n}(\tilde{X}_i,\tilde{X}'_i;\cdot)$, and set
$\tilde{X}'_{i+1} = \tilde{X}_{i+1}$ and $d_{i+1}=1$.
If the coin comes up tail, then draw $(\tilde{X}_{i+1},\tilde{X}'_{i+1})$  from the residual kernel $\RFK{i,m}{n}(\tilde{X}_i,\tilde{X}'_i; \cdot)$
and set $d_{i+1}=0$. If $(\tilde{X}_i,\tilde{X}'_i) \not\in \CS{i}{n}$,
then draw $(\tilde{X}_{i+1},\tilde{X}'_{i+1})$ according to the kernel $\bFK{i,m}{n}(\tilde{X}_i,\tilde{X}'_i; \cdot)$ and set $d_{i+1}= 0$.
For $\mu$ a probability measure on $\mathcal{B}(\Zset)$, denote $\PP^Y_{\mu}$
the probability measure induced by the Markov chain $Z_i$, $i \in \{0, \dots, n\}$ with initial
distribution $\mu$. It is then easily checked that
for any $i \in \{0, \dots, \lfloor n/ m \rfloor \}$ and any initial distributions $\Xinit$ and $\Xinit'$, and any $A,A' \in \mathcal{B}(\Xset)$,
\begin{align*}
\PP^Y_{\Xinit \otimes \Xinit' \otimes \delta_0} \left(Z_i \in A \times \Xset \times \{ 0,1 \} \right) &= \post[\Xinit]{im}{n}(A) \eqsp, \\
\PP^Y_{\Xinit \otimes \Xinit' \otimes \delta_0} \left(Z_i \in \Xset \times A' \times \{0,1\} \right)  &= \post[\Xinit']{im}{n}(A) \eqsp,
\end{align*}
where $\delta_x$ is the Dirac measure and $\otimes$ is the tensor product of measures and $\post[\Xinit]{k}{n}$ is 
the marginal posterior distribution given by \eqref{eq:smoothing:marginal-posterior}

Note that $d_i$ is the  \textit{bell variable}, which shall indicate
whether the chains have coupled ($d_i=1$) or not ($d_i=0$) by time $i$.
Define the \textit{coupling time}
\begin{equation}
\label{eq:coupling-time}
T = \inf \{ k \geq 1, d_k = 1 \} \eqsp,
\end{equation}
with the convention $\inf \emptyset = \infty$.
By the Lindvall inequality, the total variation distance
between the filtering distribution associated to two different
initial distribution $\Xinit$ and $\Xinit'$, \ie\ $\CPP[\Xinit]{X_n \in \cdot}{\chunk{Y}{0}{n}}$ and $\CPP[\Xinit']{X_n \in \cdot}{\chunk{Y}{0}{n}}$,
is bounded by the tail distribution of the coupling time,
\begin{equation}
\label{eq:lindvall-inequality}
\tvnorm{\CPP[\Xinit]{X_n \in \cdot}{\chunk{Y}{0}{n}}- \CPP[\Xinit']{X_n \in \cdot}{\chunk{Y}{0}{n}}}  \leq \PP^Y_{\Xinit \otimes \Xinit' \otimes \delta_0}(T \geq \lfloor n/m \rfloor) \eqsp.
\end{equation}
In the following section, we consider several conditions allowing to bound the tail distribution of the coupling time.

\subsection{Coupling sets}
Of course, the construction above is of interest only if we may find set-valued function $\CS{k}{n}$ such whose coupling constant $\eps_{k,m|n}(\CS{k}{n})$
is non-zero `most of the time'. Recall that this quantity are typically functions of the whole trajectory $\chunk{y}{0}{n}$.
It is not always easy to find such sets because the definition of the coupling constant involves the product $\FK{k}{n}$ forward smoothing kernels, which
is not easy to handle. In some situations (but not always), it is
possible to identify appropriate sets from the properties of the unconditional transition kernel $Q$.
\begin{defi}[Strong small set]
A set $C \in \Xsigma$ is a \emph{strong small set} for the transition kernel $Q$, if there exists a measure $\nu_C$ and constants
$\sigma_-(C) > 0$ and $\sigma_+(C) < \infty$ such that, for all $x \in C$ and $A \in \Xsigma$,
\begin{equation}
\label{eq:strong-small-set}
\sigma_-(C) \nu_C(A) \leq Q(x,A) \leq \sigma_+(C) \nu_C(A) \eqsp.
\end{equation}
\end{defi}
The following Lemma helps to characterize appropriate sets where coupling may occur with a positive probability from products of strong small sets.
\begin{prop}
\label{prop:strong-small-set}
Assume that $C$ is a strong small set.  Then, for any $n$ and any $k \in \{0, \dots, n\}$,
$C \times C$ is a coupling set for the forward smoothing kernels $\FK{k}{n}$; more precisely,
there exists a probability distribution $\nu_{k|n}$ such that, for any $A \in \Xsigma$,
\[
\inf_{x \in C} \FK{k}{n}(x,A) \geq \frac{\sigma_-(C)}{\sigma_+(C)} \nu_{k|n}(A)
\]
\end{prop}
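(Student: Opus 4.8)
The plan is to exploit the fact that, for $k\leq n-1$, the forward smoothing kernel $\FK{k}{n}(x,\cdot)$ is nothing but $Q(x,\cdot)$ reweighted by a fixed non-negative function and then renormalised, so that the two-sided bound defining a strong small set passes through the ratio essentially verbatim.

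First I would fix $k\in\{0,\dots,n-1\}$ and set $h(x')\eqdef g(x',y_{k+1})\,\backvar{k+1}{n}(x')$, a non-negative measurable function that does \emph{not} depend on $x$. Combining the defining formula \eqref{eq:smoothing:forward_kernel} with the backward recursion \eqref{eq:rec:beta} (which identifies $\backvar{k}{n}(x)$ with the integral of $h$ against $Q(x,\cdot)$) gives, for every $x$ with $\backvar{k}{n}(x)\neq 0$,
\[
\FK{k}{n}(x,A) = \frac{\int_A Q(x,dx')\,h(x')}{\int_{\Xset} Q(x,dx')\,h(x')} \eqsp.
\]

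Next, since $h\geq 0$, the measure inequalities $\sigma_-(C)\,\nu_C \leq Q(x,\cdot) \leq \sigma_+(C)\,\nu_C$ (valid for all $x\in C$) integrate against $h$ to bound the numerator from below and the denominator from above:
\[
\int_A Q(x,dx')\,h(x') \geq \sigma_-(C)\int_A \nu_C(dx')\,h(x'), \qquad \int_{\Xset} Q(x,dx')\,h(x') \leq \sigma_+(C)\int_{\Xset} \nu_C(dx')\,h(x') \eqsp.
\]
Defining
\[
\nu_{k|n}(A) \eqdef \frac{\int_A \nu_C(dx')\,h(x')}{\int_{\Xset} \nu_C(dx')\,h(x')} \eqsp,
\]
which is a probability measure \emph{not depending on} $x$, and dividing the two displays yields $\FK{k}{n}(x,A) \geq \frac{\sigma_-(C)}{\sigma_+(C)}\,\nu_{k|n}(A)$ for every $x\in C$ and $A\in\Xsigma$; taking the infimum over $x\in C$ gives the claim. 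The boundary index $k=n$ is even simpler: there $\FK{n}{n}=Q$, and the desired inequality is immediate from \eqref{eq:strong-small-set} upon taking $\nu_{n|n}=\nu_C/\nu_C(\Xset)$, since $\sigma_+(C)\,\nu_C(\Xset)\geq Q(x,\Xset)=1$ forces $\sigma_-(C)\,\nu_C \geq \frac{\sigma_-(C)}{\sigma_+(C)}\,\nu_{n|n}$.

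The computation above is a one-line manipulation; the only point that requires care --- and hence the main obstacle --- is to certify that $\nu_{k|n}$ is a genuine (finite, non-trivial) probability measure, i.e.\ that its normalising constant $\int_{\Xset}\nu_C(dx')\,h(x')$ lies in $(0,\infty)$. Both ends follow by sandwiching this constant against $\backvar{k}{n}(x)$ for a fixed $x\in C$: the upper bound on $Q$ gives $\int_{\Xset}\nu_C(dx')\,h(x')\geq \sigma_+(C)^{-1}\,\backvar{k}{n}(x)$, while the lower bound gives $\int_{\Xset}\nu_C(dx')\,h(x')\leq \sigma_-(C)^{-1}\,\backvar{k}{n}(x)$. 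Thus finiteness and positivity of the normalising constant are equivalent to $\backvar{k}{n}(x)\in(0,\infty)$ on $C$, which is exactly the non-degeneracy already needed for $\FK{k}{n}(x,\cdot)$ to be well defined on $C$ (recall it is set to $0$ when $\backvar{k}{n}(x)=0$). I would therefore record at the outset that the statement is understood on the event where the observations render $\backvar{k}{n}$ strictly positive and finite on $C$, which is the only regime in which $C\times C$ can serve as a coupling set.
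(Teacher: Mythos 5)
Your proof is correct and follows essentially the same route as the paper's: sandwich $Q(x,\cdot)$ between $\sigma_-(C)\,\nu_C$ and $\sigma_+(C)\,\nu_C$ in the numerator and denominator of \eqref{eq:smoothing:forward_kernel} and read off the minorizing probability $\nu_{k|n}$ as the $\nu_C$-reweighted, renormalised measure. Your write-up is in fact the more careful one --- the paper's displayed $\nu_{k|n}$ drops the factor $g(x_{k+1},y_{k+1})$ and is written so as to appear to depend on $x_k$, and it addresses neither the boundary case $k=n$ nor the non-degeneracy of the normalising constant, all of which you handle.
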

\begin{proof} The proof is postponed to the appendix. \end{proof}
Assume that $\Xset = \rset^d$, and that the kernel satisfies the \emph{pseudo-mixing} condition \eqref{eq:pseudo-mixing-kernel}.
Let $C$ be a compact set $C$ with diameter $d = \diam(C)$ large enough so that \eqref{eq:pseudo-mixing-kernel} is satisfied. Then,
for any $n$ and any $k \in \{0, \dots, n\}$, $\bar{C}= C \times C$ is a coupling set for $\FK{k}{n}$, and $\eps(\bar{C})$ may be chosen
to be equal to $\eps_-(d)/\eps_+(d)$.
\cite{legland:oudjane:2003} gives non-trivial examples of pseudo-mixing Markov chains which are not uniformly ergodic.
Nevertheless, though the existence of small sets is automatically guaranteed for phi-irreducible Markov chains,
the conditions imposed for the existence of a strong small set are much more stringent.
As shown below, it is sometimes worthwhile to consider coupling set which are much larger than products of strong small sets.

\section{Coupling over the whole state-space}
\label{sec:uniform-accessibility}
The easiest situation is when the coupling constant of the whole state space $\eps_{k,m|n}(\Xset \times \Xset)$
is away from zero for sufficiently many trajectories $\chunk{y}{0}{n}$; for unconditional Markov chains, this property occurs
when the chain in uniformly ergodic (\ie\ satisfies the Doeblin condition). This is still the case here, through now the constants may depend
on the observations $Y$'s.  As stressed in the discussion, perhaps surprisingly, we will find non trivial examples where
the coupling constant $\epsilon_{k,m|n}(\Xset \times \Xset)$ is bounded away from zero for all $\chunk{y}{0}{n}$,
whereas the underlying unconditional Markov chain is \emph{not} uniformly geometrically ergodic.
We state without proof the following elementary result.
\begin{thm}
\label{thm:uniform-ergodicity-posterior}
Let $n$ be an integer and $m \geq 1$.  Then,
\[
\tvnorm{\filt[\Xinit]{n} - \filt[\Xinit']{n}}  \leq \prod_{k=0}^{\lfloor n/\ell \rfloor} \left\{1 - \eps_{k,m|n}(\Xset \times \Xset) \right\} \eqsp.
\]
\end{thm}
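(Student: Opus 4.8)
The plan is to read the bound directly off the coupling construction of Section 3, specialized to the degenerate choice in which the coupling set is the entire product space, $\CS{k}{n} = \Xset \times \Xset$ for every $k$. The decisive simplification is that with this choice the ``outside the coupling set'' branch governed by $\bFK{k,m}{n}$ is never visited: at every skeleton step the pair $(\tilde{X}_k,\tilde{X}'_k)$ automatically lies in $\CS{k}{n}$, so whenever $d_k=0$ the construction tosses a coin with head probability $\eps_{k,m|n}(\Xset\times\Xset)$. This is exactly what turns the tail of the coupling time into a clean product.

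First I would record that the filter is the conditional law appearing in the Lindvall bound, i.e. $\filt[\Xinit]{n} = \CPP[\Xinit]{X_n \in \cdot}{\chunk{Y}{0}{n}}$, which is precisely \eqref{eq:filtering-distribution-1}. Then \eqref{eq:lindvall-inequality} gives at once
\[
\tvnorm{\filt[\Xinit]{n} - \filt[\Xinit']{n}} \leq \PP^Y_{\Xinit \otimes \Xinit' \otimes \delta_0}\big(T \geq \lfloor n/m \rfloor\big) \eqsp,
\]
so the entire problem reduces to bounding the tail of the coupling time $T$ from \eqref{eq:coupling-time}. Nothing about the underlying chain $Q$ is needed beyond what is already packaged in the forward smoothing kernels.

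Next I would evaluate this tail. Writing $N = \lfloor n/m\rfloor$, the event $\{T \geq N\}$ is the event that the bell has not rung by skeleton time $N-1$. Conditionally on the observations, $\eps_{k,m|n}(\Xset\times\Xset)$ is a deterministic number: being a supremum over the fixed set $\Xset\times\Xset$ in \eqref{eq:definition-coupling-constant}, it does not depend on the current states $(\tilde{X}_k,\tilde{X}'_k)$. Since the coupling set is the whole space, on the event $\{d_k=0\}$ the coin is always tossed, so by the Markov property of $Z$,
\[
\PP^Y_{\Xinit \otimes \Xinit' \otimes \delta_0}\big(d_{k+1} = 0 \mid Z_k\big) = 1 - \eps_{k,m|n}(\Xset\times\Xset) \quad \text{on } \{d_k=0\} \eqsp,
\]
a quantity independent of the state. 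Chaining these identities through the tower property factorizes the tail \emph{exactly} into a product of the per-step non-coupling probabilities, giving $\PP^Y(T \geq N) = \prod_k \{1 - \eps_{k,m|n}(\Xset\times\Xset)\}$ over the range of transition indices dictated by the convention for $T$, and combining with the display above yields the asserted bound.

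There is no genuinely hard step — this is why the statement is advertised as elementary — but the one point demanding care is exactly that the per-step coupling probability is \emph{constant in the state}. This is special to the choice $C = \Xset\times\Xset$: for a proper coupling set the head probability $\eps_{k,m|n}(C)$ would only be available on $C$, the chains could leave $C$, and the $\bFK{k,m}{n}$ branch would spoil the telescoping product. Here no such branch is ever entered, so the only residual work is the routine floor bookkeeping needed to line up the index range of the product with the definition of $T$, after which the factors $\{1 - \eps_{k,m|n}(\Xset\times\Xset)\}$ drop straight out of the chained conditional probabilities.
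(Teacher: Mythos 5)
Your argument is correct and is precisely the one the paper intends: the theorem is stated in the paper without proof as an elementary consequence of the coupling construction of Section~3, and your specialization to $\CS{k}{n}=\Xset\times\Xset$ (so the $\bFK{k,m}{n}$ branch is never entered and the per-step head probability $\eps_{k,m|n}(\Xset\times\Xset)$ is state-independent), combined with the Lindvall inequality \eqref{eq:lindvall-inequality}, is exactly that consequence. The only residual point is harmless index bookkeeping (the paper's $\lfloor n/\ell\rfloor$ is evidently a typo for $\lfloor n/m\rfloor$), which you already flag.
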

\begin{remark}
Consider the case where the kernel is uniformly ergodic,
\ie\
$$  \sigma_- \eqdef \inf_{(x,x') \in \Xset \times \Xset}
q(x,x')  > 0 \quad \text{and} \quad \sigma_+ \eqdef \sup_{(x,x') \in \Xset \times \Xset}
q(x,x') <\infty \eqsp. $$
One may thus take $m=1$ and, using Proposition \ref{prop:strong-small-set} $\epsilon_{k,1|n}(\Xset \times \Xset) \eqdef  \sigma_-/\sigma_+$.
In such a case, $\tvnorm{\filt[\Xinit]{n} - \filt[\Xinit']{n}}  \leq (1- \sigma_-/\sigma_+)^n$.
\end{remark}

To go beyond this example, we have to find verifiable conditions upon which we may ascertain that $\Xset \times \Xset$ is an $m$-coupling set.
\begin{defi}[Uniform accessibility]
Let $k,\ell,n$ be integers satisfying $\ell \geq 1$ and $k \in \{0, \dots, n-\ell\}$. A set $C$
is uniformly accessible for the forward smoothing kernels $\FK{k,\ell}{n}$ if there exists a constant $\kappa_{k,\ell}(C) > 0$ satisfying,
\begin{equation}
\label{eq:uniform-accessibility-condition}
\inf_{x\in \Xset} \FK{k,\ell}{n}(x,C) \geq \kappa_{k,\ell}(C) \eqsp.
\end{equation}
\end{defi}
The next step is to find conditions upon which a set is uniformly accessible.
For any set $A \in \mathcal{B}(\Xset)$,
define the function $\alpha: \Yset^{\ell} \to [0,1]$
\begin{equation}
\label{eq:definition-alpha}
\alpha(\chunk{y}{1}{\ell};A)
\eqdef \inf_{x_0,x_{\ell+1} \in \Xset \times \Xset} \frac{\W{\chunk{y}{1}{\ell}}{x_0,x_\ell}{A}}{\W{\chunk{y}{1}{\ell}}{x_0,x_{\ell+1}}{\Xset}}
= \left(1 + \tilde{\alpha}(\chunk{y}{1}{\ell};A)   \right)^{-1} \eqsp,
\end{equation}
where we have set
\begin{multline}
\label{eq:definition-W}
\W{\chunk{y}{1}{\ell}}{x_0,x_{\ell+1}}{A} \eqdef \\
\idotsint  \prod_{i=0}^{\ell} q(x_{i-1},x_{i}) g(x_i,y_i) q(x_{\ell},x_{\ell+1}) \1_A(x_{\ell}) \mu(\rmd \chunk{x}{1}{\ell}) \eqsp.
\end{multline}
and
\begin{equation}
\label{eq:tilde-alpha}
\tilde{\alpha}(\chunk{y}{1}{\ell-1};A) \eqdef \sup_{x_0,x_\ell \in \Xset \times \Xset} \frac{\W{\chunk{y}{1}{\ell-1}}{x_0,x_\ell}{A^c}}{\W{\chunk{y}{1}{\ell-1}}{x_0,x_\ell}{A}} \eqsp.
\end{equation}
Of course, the situations of interest are when $\alpha(\chunk{y}{1}{\ell-1};A)$ is positive or, equivalently,
$\tilde{\alpha}(\chunk{y}{1}{\ell-1};A) < \infty$. In such case, we may prove the following uniform accessibility condition:
\begin{prop}
For any integer $n$ and any $k \in \{0, \dots, n-\ell\}$,
\label{prop:uniform-accessibility}
\begin{equation}
\label{eq:minorationunifQ}
\inf_{x \in \Xset} \FK{k,\ell}{n}(x,C) \geq   \alpha(\chunk{Y}{k+1}{k+\ell};C) \eqsp.
\end{equation}
If in addition $C$ is a strong small set for $Q$, then $\Xset \times \Xset$ is a $(\ell+1)$-coupling set,
\begin{equation}
\label{eq:coupling-set}
\inf_{x \in \Xset} \FK{k+\ell+1}{n} \dots \FK{k}{n}(x,A) \geq \frac{\sigma_-(C)}{\sigma_+(C)} \alpha(\chunk{Y}{k+1}{k+\ell};C) \eqsp.
\end{equation}
\end{prop}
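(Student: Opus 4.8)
The plan is to collapse the $\ell$-step smoothing kernel to a ratio of unnormalised integrals, isolate the dependence on the future into a single nonnegative weight, and then use that the infimum defining $\alpha$ runs over the terminal variable so that a pointwise bound survives integration against that weight. First I would unfold $\FK{k,\ell}{n}=\FK{k+\ell-1}{n}\cdots\FK{k}{n}$ using the definition \eqref{eq:smoothing:forward_kernel}. Each factor $\FK{k+j}{n}(x_j,\rmd x_{j+1})$ contributes $[\backvar{k+j}{n}(x_j)]^{-1}\backvar{k+j+1}{n}(x_{j+1})\,Q(x_j,\rmd x_{j+1})g(x_{j+1},y_{k+j+1})$, so the intermediate backward functions $\backvar{k+1}{n},\dots,\backvar{k+\ell-1}{n}$ cancel telescopically, leaving for every $x\in\Xset$ (with $x_0=x$)
\[
\FK{k,\ell}{n}(x,C) = \frac{\idotsint \prod_{j=1}^{\ell} Q(x_{j-1},\rmd x_j)\, g(x_j,y_{k+j})\, \backvar{k+\ell}{n}(x_\ell)\,\1_C(x_\ell)}{\idotsint \prod_{j=1}^{\ell} Q(x_{j-1},\rmd x_j)\, g(x_j,y_{k+j})\, \backvar{k+\ell}{n}(x_\ell)}\eqsp,
\]
the denominator being exactly $\backvar{k}{n}(x)$ by $\ell$-fold iteration of \eqref{eq:rec:beta}.

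Second, to dispose of the remaining backward function I would apply \eqref{eq:rec:beta} once more, writing $\backvar{k+\ell}{n}(x_\ell)=\int Q(x_\ell,\rmd x_{\ell+1})\,h(x_{\ell+1})$ with $h(x_{\ell+1})\eqdef g(x_{\ell+1},y_{k+\ell+1})\backvar{k+\ell+1}{n}(x_{\ell+1})\ge 0$, a weight depending only on the future observations $\chunk{y}{k+\ell+1}{n}$. Substituting this, using $Q(x,\rmd x')=q(x,x')\mu(\rmd x')$, and using Fubini to pull the $x_{\ell+1}$-integration to the outside, the numerator and denominator become $\int h(x_{\ell+1})\,\W{\chunk{y}{k+1}{k+\ell}}{x,x_{\ell+1}}{C}\,\mu(\rmd x_{\ell+1})$ and $\int h(x_{\ell+1})\,\W{\chunk{y}{k+1}{k+\ell}}{x,x_{\ell+1}}{\Xset}\,\mu(\rmd x_{\ell+1})$, where $\W{}{}{}$ is precisely the kernel \eqref{eq:definition-W}.

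Third, the definition \eqref{eq:definition-alpha} gives, for all $x,x_{\ell+1}\in\Xset$, the pointwise inequality
\[
\W{\chunk{y}{k+1}{k+\ell}}{x,x_{\ell+1}}{C} \ge \alpha(\chunk{y}{k+1}{k+\ell};C)\,\W{\chunk{y}{k+1}{k+\ell}}{x,x_{\ell+1}}{\Xset}\eqsp.
\]
Since $h\ge 0$, integrating this against $h(x_{\ell+1})\mu(\rmd x_{\ell+1})$ bounds the numerator below by $\alpha$ times the denominator, so $\FK{k,\ell}{n}(x,C)\ge\alpha(\chunk{Y}{k+1}{k+\ell};C)$ for every $x$; taking the infimum over $x$ yields \eqref{eq:minorationunifQ}. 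For the $(\ell+1)$-coupling claim I would chain one further step through $C$: if $C$ is a strong small set, Proposition~\ref{prop:strong-small-set} supplies a probability measure $\nu_{k+\ell|n}$ with $\inf_{z\in C}\FK{k+\ell}{n}(z,A)\ge\frac{\sigma_-(C)}{\sigma_+(C)}\nu_{k+\ell|n}(A)$. Decomposing $\FK{k+\ell}{n}\cdots\FK{k}{n}(x,A)=\int\FK{k,\ell}{n}(x,\rmd z)\,\FK{k+\ell}{n}(z,A)$, restricting the integral to $z\in C$, inserting this minorisation, and then using $\FK{k,\ell}{n}(x,C)\ge\alpha$ from the previous step gives
\[
\inf_{x\in\Xset}\FK{k+\ell}{n}\cdots\FK{k}{n}(x,A) \ge \frac{\sigma_-(C)}{\sigma_+(C)}\,\alpha(\chunk{Y}{k+1}{k+\ell};C)\,\nu_{k+\ell|n}(A)\eqsp,
\]
which is the asserted bound and exhibits $\Xset\times\Xset$ as an $(\ell+1)$-coupling set with constant $\frac{\sigma_-(C)}{\sigma_+(C)}\alpha(\chunk{Y}{k+1}{k+\ell};C)$.

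The main obstacle is the second step. Because $\FK{k,\ell}{n}$ depends on the entire future through $\backvar{k+\ell}{n}$, which we cannot control, the argument closes only because the infimum in the definition of $\alpha$ ranges over the terminal variable $x_{\ell+1}$: this makes the ratio bound uniform in $x_{\ell+1}$ and hence stable under integration against the unknown nonnegative weight $h$. Once that observation is in place, the remainder is bookkeeping of the telescoping cancellation and a single extra application of the strong small set minorisation.
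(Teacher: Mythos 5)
Your proof is correct and follows essentially the same route as the paper: both reduce $\FK{k,\ell}{n}(x,C)$ to a ratio of integrals of $\W{\chunk{Y}{k+1}{k+\ell}}{x,x_{\ell+1}}{C}$ and $\W{\chunk{Y}{k+1}{k+\ell}}{x,x_{\ell+1}}{\Xset}$ against a nonnegative weight carrying the future observations, and then exploit that the infimum defining $\alpha$ is taken over the terminal variable so the pointwise ratio bound survives the integration. Your treatment of the second claim (one extra step through $C$ via Proposition~\ref{prop:strong-small-set}) is the intended argument, which the paper leaves implicit, and your version even supplies the minorising measure $\nu_{k+\ell|n}(A)$ that the paper's display omits.
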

The proof is given in Section \ref{sec:proof-strong-small}.


\subsection{Examples}

\subsubsection{Bounded noise}
Assume that a Markov chain $\Xproc$ in $\Xset= \rset^{d_X}$ is observed in a bounded noise.
The case of bounded error is of course particular, because the observations of
the $Y$'s allow to locate the corresponding $X$'s within a set.
More precisely, we assume that $\Xproc$ is a Markov chain with transition kernel $Q$ having density $q$
\wrt\ the Lebesgue measure and $Y_k = b(X_k) + V_k$ where,
\begin{itemize}
\item $\{V_k\}$ is an i.i.d., independent of $\{X_k\}$, with density $p_V$. In addition, $p_V(|x|)= 0$ for $|x| \geq M$.
\item
\label{ass:boundederror2} the transition density $(x,x') \mapsto q(x,x')$ is strictly positive and continuous.
\item
\label{ass:levelset} The level sets of $b$, $ \{ x \in \Xset : |b(x)| \leq K \}$ are compact.
\end{itemize}
This case has already been considered by  \cite{budhiraja:ocone:1997}, using projective Hilbert metrics
techniques. We will compute an explicit lower bound for the coupling constant $\epsilon_{k,2|n}(\Xset \times \Xset)$, and will then prove,
under mild additional assumptions on the distribution of the $Y$'s that the chain forgets its initial conditions geometrically fast.
For $y \in \Yset$, denote $C(y) \eqdef \{ x \in \Xset, |b(x)| \leq |y|+M \}$. Note that, for any $x \in \Xset$ and
$A \in \Xsigma$,
\begin{multline*}
\FK{k+1}{n} \FK{k}{n}(x,A) = \\
\frac{\iint q(x,x_{k+1}) g(x_{k+1},Y_{k+1}) q(x_{k+1},x_{k+2}) g(x_{k+2},Y_{k+2}) \1_A(x_{k+2}) \backvar{k+2}{n}(x_{k+2}) \rmd x_{k+2}}{\iint q(x,x_{k+1}) g(x_{k+1},Y_{k+1}) q(x_{k+1},x_{k+2}) g(x_{k+2},Y_{k+2})  \backvar{k+2}{n}(x_{k+2}) \rmd x_{k+2}}
\end{multline*}
Since $q$ is continuous and positive, for any compact sets $C$ and $C'$, $\inf_{C\times C'} q(x,x') > 0$ and
$\sup_{C \times C'} q(x,x') < \infty$. On the other hand, because the observation noise is bounded, $g(x,y)= g(x,y) \1_{C(y)}(x)$.
Therefore,
\[
\FK{k+1}{n} \FK{k}{n}(x,A) \geq \rho(Y_{k+1},Y_{k+2}) \nu_{k|n}(A) \eqsp,
\]
where
\[
\rho(y,y')= \frac{\inf_{C(y) \times C(y')} q(x,x')}{\sup_{C(y) \times C(y')} q(x,x')} \eqsp,
\]
and
\[
\nu_{k|n}(A) \eqdef \frac{\int g(x_{k+2},Y_{k+2}) \1_A(x_{k+2}) \backvar{k+2}{n}(x_{k+2}) \nu(\rmd x_{k+2})}{\int g(x_{k+2},Y_{k+2})  \backvar{k+2}{n}(x_{k+2}) \nu(\rmd x_{k+2})} \eqsp.
\]
By applying Theorem \ref{thm:uniform-ergodicity-posterior}, we obtain that
\[
\tvnorm{\filt[\Xinit]{n}- \filt[\Xinit']{n}} \leq \prod_{k=0}^{\lfloor n/2 \rfloor} \{1 - \rho(Y_{2k},Y_{2k+1})\} \eqsp.
\]
Hence, the Markov chain is geometrically ergodic if
\[
\liminf_{n \to \infty} n^{-1} \sum_{k=0}^{\lfloor n/2 \rfloor} \rho(Y_{2k},Y_{2k+1})  > 0 \eqsp, \quad a.s. \eqsp.
\]
This property holds under many different assumptions on the observations $\chunk{Y}{0}{n}$ and in particular, if the observations follow a model which `approximately equal' to the assumed one.
\subsubsection{Functional autoregressive in noise}
\label{sec:FAR}
It is also of interest to consider cases
where both the $X$'s and the $Y$'s are unbounded. We consider a non-linear non-Gaussian
state space model (borrowed from \cite[Example 5.8]{legland:oudjane:2003}).
We assume that $X_0 \sim \Xinit$ and for $k \geq 1$,
\begin{align*}
X_k &= a(X_{k-1}) + U_k \eqsp, \\
Y_k &= b(X_k) + V_k \eqsp,
\end{align*}
where $\{U_k\}$ and $\{V_k\}$ are two independent sequences of  random variables,
with probability densities $\bar p_U$ and $\bar p_V$ \wrt\  the
Lebesgue measure on $\Xset = \rset^{d_X}$ and $\Yset = \rset^{d_Y}$, respectively. In addition, we assume that
\begin{itemize}
\item For any $x \in \Xset = \rset^{d_X}$,  $\bar p_U(x)=p_U(|x|)$ where $p_U$ is a bounded, bounded away from zero on $[0,M]$,
is non increasing on $[M,\infty[$, and for some positive constant $\gamma$,
\begin{equation}
\label{eq:condition-noise}
\frac{p_U(\alpha+\beta)}{p_U(\alpha) p_U(\beta)} \geq \gamma > 0  \eqsp.
\end{equation},
\item the function $a$ is  Lipshitz, \ie\ there exists a positive constant $a_+$ such that
$| a(x) - a(x')| \leq a_+ |x-x'|$, for any $x,x' \in \Xset$,
\item  the function $b$ is one-to-one differentiable and its Jacobian is bounded and bounded away from zero.
\item For any $y \in \Yset = \rset^{d_Y}$,  $\bar p_V(y)=p_V(|y|)$ where $p_V$ is a bounded positive lower semi-continuous function,
$p_V$ is non increasing on $[M,\infty[$, and satisfies
\begin{equation}
\label{eq:condition-tail}
\Upsilon \eqdef \int_0^\infty [p_U(x)]^{-1} p_V(b_- x) [p_U(a_+ x)]^{-1} \rmd x < \infty \eqsp,
\end{equation}
where $b_-$ is the lower bound for the Jacobian of the function $b$.
\end{itemize}
The condition on the state noise $\{U_k\}$
is satisfied by Pareto-type, exponential and logistic densities but obviously not by Gaussian density, because
the tails are in such case too light.

The fact that the tails of the state noise $U$  are heavier than the tails of the observation noise $V$
(see \eqref{eq:condition-tail}) plays a key role in the derivations that follow.
In Section \ref{sec:ProxSet} we consider a case where this restriction is not needed (e.g., normal).
\marg{it is even worse than this here. This is because the condition on the state space noise is not satisfied
by Gaussian noise}

The following technical lemma (whose proof is postponed to section
\ref{sec:proof:FAR}), shows that any set with finite diameter is a strong small set.
\begin{lemma}
\label{lem:FAR}
Assume that $\diam(C) < \infty$. Then, for all $x_0 \in C$ and $x_1 \in \Xset$,
\begin{equation}
\label{eq:strong-small-set-FAR}
\epsilon(C) h_C(x_1) \leq q(x_0,x_1) \leq \epsilon^{-1}(C) h_C(x_1) \eqsp,
\end{equation}
with
\begin{align}
\label{eq:definition-epsilon}
\epsilon(C) &\eqdef \gamma p_U(\diam(C)) \wedge \inf_{u \leq \diam(C)+M} p_U(u) \wedge \left(\sup_{u \leq \diam(C)+M} p_U(u) \right)^{-1} \eqsp,  \\
\label{eq:definition-h-FAR}
h_C(x_1) &\eqdef \1 ( d(x_1, a(C) ) \leq M) +  \1 ( d(x_1, a(C) ) > M) p_U(|x_1 - a(z_0)|) \eqsp,
\end{align}
where $\gamma$ is defined in \eqref{eq:condition-noise} and $z_0$ is an arbitrary element of $C$.   In addition, for all $x_0 \in \Xset$ and $x_1 \in C$,
\begin{equation}
\label{eq:decomposability-condition}
\nu(C) k_C(x_0) \leq  q(x_0,x_1) \eqsp,
\end{equation}
with
\begin{align}
\label{eq:definition-nu}
\nu(C) &\eqdef \inf_{|u| \leq \diam(C)+M} p_U \eqsp, \\
\label{eq:definition-k-FAR}
k_C(x_0) &\eqdef \1 (d (a(x_0),C) < M) + \1 (d (a(x_0),C) \geq M) p_U(|z_1-a(x_0)|) \eqsp,
\end{align}
where $z_1$ is an arbitrary point in $C$.
\end{lemma}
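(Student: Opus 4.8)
The starting point is the explicit form of the transition density. Under the dynamics $X_k=a(X_{k-1})+U_k$ with $U_k$ carrying the radial density $\bar p_U(\cdot)=p_U(|\cdot|)$, one has $q(x_0,x_1)=p_U(|x_1-a(x_0)|)$. Both inequalities of the lemma are therefore statements comparing $p_U$ evaluated at the moving argument $|x_1-a(x_0)|$ with $p_U$ evaluated at a reference argument, uniformly over the point that varies. The plan is to split each bound according to whether the relevant argument lies in a \emph{bulk} region (distance to the image set at most $M$) or a \emph{tail} region (distance exceeding $M$), and to exploit the three structural properties of $p_U$: it is bounded and bounded away from zero on compacts, it is non-increasing on $[M,\infty)$, and it is sub-multiplicative in the sense of \eqref{eq:condition-noise}.

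For the two-sided bound \eqref{eq:strong-small-set-FAR} I would fix $z_0\in C$ and treat the two indicator regions of \eqref{eq:definition-h-FAR} in turn. On the bulk region $\{d(x_1,a(C))\le M\}$, where $h_C(x_1)=1$, pick a near-minimiser $x_0^\star\in C$ with $|x_1-a(x_0^\star)|\le M$; the Lipschitz bound $|a(x_0)-a(x_0^\star)|\le a_+\diam(C)$ then gives $|x_1-a(x_0)|\le M+a_+\diam(C)$ for every $x_0\in C$, so that $q(x_0,x_1)$ is pinched between $\inf_{u\le\diam(C)+M}p_U$ and $\sup_{u\le\diam(C)+M}p_U$ — precisely the second and third terms of $\epsilon(C)$ in \eqref{eq:definition-epsilon}. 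On the tail region $\{d(x_1,a(C))>M\}$, where $h_C(x_1)=p_U(|x_1-a(z_0)|)$, both $s\eqdef|x_1-a(x_0)|$ and $t\eqdef|x_1-a(z_0)|$ exceed $M$ (every point of $a(C)$ is farther than $M$ from $x_1$) and differ by at most $a_+\diam(C)$. Monotonicity gives $p_U(s)\ge p_U(t+a_+\diam(C))$, and \eqref{eq:condition-noise} then yields $p_U(t+a_+\diam(C))\ge\gamma\,p_U(t)\,p_U(a_+\diam(C))$, producing the factor $\gamma p_U(\diam(C))$, the first term of $\epsilon(C)$; swapping the roles of $s$ and $t$ gives the matching upper bound $\epsilon^{-1}(C)h_C(x_1)$.

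The decomposability bound \eqref{eq:decomposability-condition} is handled in the same spirit, splitting instead on $d(a(x_0),C)$ and reading off \eqref{eq:definition-k-FAR}. For $x_1\in C$ and $x_0$ with $d(a(x_0),C)<M$ one has $|x_1-a(x_0)|\le\diam(C)+M$, whence $q(x_0,x_1)\ge\inf_{u\le\diam(C)+M}p_U=\nu(C)$ while $k_C(x_0)=1$. For $d(a(x_0),C)\ge M$ both $|x_1-a(x_0)|$ and $|z_1-a(x_0)|$ exceed $M$ and differ by at most $\diam(C)$, so the identical monotonicity-plus-\eqref{eq:condition-noise} comparison lower-bounds $q(x_0,x_1)$ by a positive constant multiple of $k_C(x_0)=p_U(|z_1-a(x_0)|)$, the constant being the one recorded as $\nu(C)$ in \eqref{eq:definition-nu}.

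The crux — and the only place where the distributional hypotheses genuinely enter — is the tail comparison: bounding the ratio $p_U(s)/p_U(t)$ from above and below when $s,t\ge M$ are both large but differ only by a bounded amount. Monotonicity alone yields only a one-sided estimate, so the essential tool is the sub-multiplicativity \eqref{eq:condition-noise}, which prevents the tail from decaying faster than an exponential-type rate and keeps the ratio uniformly controlled. This is exactly the step a Gaussian state noise would break: super-exponential tails force $p_U(t+\delta)/p_U(t)\to 0$ as $t\to\infty$, so no uniform $\gamma>0$ exists and $\Xset\times\Xset$ ceases to be a coupling set, in line with the heavier-tails hypothesis \eqref{eq:condition-tail}. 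A secondary bookkeeping point is the Lipschitz factor $a_+$: the displacement of the centres is $|a(x_0)-a(z_0)|\le a_+\diam(C)$ rather than $\diam(C)$, so $a_+$ must be carried through the monotonicity step and absorbed into the constants $\epsilon(C)$ and $\nu(C)$.
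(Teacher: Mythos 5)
Your proof is correct and follows essentially the same route as the paper's: write $q(x_0,x_1)=p_U(|x_1-a(x_0)|)$, split on whether the relevant distance to $a(C)$ (resp.\ to $C$) exceeds $M$, use boundedness of $p_U$ above and below on compacts in the bulk case, and combine monotonicity of $p_U$ on $[M,\infty)$ with the sub-multiplicativity condition \eqref{eq:condition-noise} in the tail case. Your explicit tracking of the Lipschitz factor $a_+$ (so that the centres shift by $a_+\diam(C)$ rather than $\diam(C)$) is in fact slightly more careful than the paper's own proof, which silently identifies $\diam(a(C))$ with $\diam(C)$ in the constants \eqref{eq:definition-epsilon} and \eqref{eq:definition-nu}.
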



By Lemma \ref{lem:FAR}, the denominator of \eqref{eq:tilde-alpha} is lower bounded by
\begin{equation}
\label{eq:lowerbound-denominator}
W[y](x_0,x_2;C) \geq \epsilon( C) \; \nu(C) k_C(x_0) h_C(x_2) \int_C  g(x_1,y)  \rmd x_1 \eqsp.
\end{equation}
Therefore, we may bound  $\tilde{\alpha}(y_1,C)$, defined in \eqref{eq:tilde-alpha}, by
\begin{multline}
\label{eq:cond4}
\tilde{\alpha}(y_1,C) \leq \left( \epsilon(C) \; \nu(C) \; \int_C g(x_1,y_1) \rmd x_1 \right)^{-1} \; \\ \times
\sup_{x_0,x_2 \in \Xset} [k_C(x_0)]^{-1} [h_C(x_2)]^{-1} W[y_1](x_0,x_2;C^c)   \eqsp.
\end{multline}
In the sequel, we choose $C = C_K(y) \eqdef \{ x, |x - b^{-1}(y) | \leq K \}$, where $K$ is a constant which will be chosen later. Since, by construction, the diameter of the set
$C_K(y)$ is $2K$ uniformly \wrt\ $y$, the constants $\epsilon(C_K(y))$  (defined in \eqref{eq:definition-epsilon}) and
$\nu(C_K(y))$ (defined in \eqref{eq:definition-nu}) are functions of $K$ only and are therefore uniformly bounded from below \wrt\ $y$.
We will first show that, for $K$ large enough, $\int_{C_K(y)}  g(x_1,y) \rmd x_1$ is uniformly bounded from below, as shown in the
following Lemma (whose proved is postponed to Section \ref{sec:proof:FAR}). The following two Lemmas bound the terms appearing in the RHS of \eqref{eq:cond4}.
\begin{lemma}
\label{lem:borne-inf-uniforme}
\[
\lim_{K \to \infty} \inf_{\Yset} \int_{C_K(y)} p_V(|y-b(x)|) \rmd x > 0 \eqsp.
\]
\end{lemma}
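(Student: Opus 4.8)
The plan is to eliminate the dependence on $y$ by the change of variables $u = b(x)$, which converts the integral into an integral of the density $\bar p_V$ over a Euclidean ball centred at the origin whose radius grows linearly in $K$ and which no longer depends on $y$; positivity of the limit is then immediate because $\bar p_V$ integrates to one.

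First I would unpack the assumption that $b$ is one-to-one, differentiable, with Jacobian bounded and bounded away from zero. Since $b$ is a differentiable bijection with everywhere invertible derivative (injectivity together with the fact that $b^{-1}(y)$ appears in the very definition of $C_K(y)$ forces $b$ to be a bijection of $\rset^{d_X}$, and necessarily $d_Y = d_X$), these hypotheses make $b$ bi-Lipschitz:
\[
b_- |x-x'| \leq |b(x)-b(x')| \leq b_+ |x-x'| \eqsp, \qquad x,x' \in \rset^{d_X}\eqsp,
\]
where $b_-$ is the lower bound already introduced in \eqref{eq:condition-tail} and $b_+$ is an upper bound on the Jacobian. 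The upper bound also controls the Jacobian determinant, so that $|\det Db^{-1}(u)| \geq c_b$ for some constant $c_b > 0$ and all $u$.

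Next I would use the lower Lipschitz bound to show that $b(C_K(y))$ contains the ball of radius $b_- K$ about $y$: if $|u-y| \leq b_- K$ then $x \eqdef b^{-1}(u)$ satisfies $|x - b^{-1}(y)| \leq b_-^{-1}|u-y| \leq K$, so $x \in C_K(y)$ and hence $u \in b(C_K(y))$. The change of variables $u = b(x)$ together with this inclusion and the lower bound on the inverse Jacobian determinant then gives, for every $y$,
\[
\int_{C_K(y)} p_V(|y-b(x)|)\,\rmd x = \int_{b(C_K(y))} p_V(|y-u|)\,|\det Db^{-1}(u)|\,\rmd u \geq c_b \int_{\{|u-y|\leq b_- K\}} p_V(|y-u|)\,\rmd u \eqsp.
\]
After the translation $v = u-y$ the right-hand side equals $c_b \int_{\{|v|\leq b_- K\}} \bar p_V(v)\,\rmd v$, which is manifestly independent of $y$; this is precisely where the uniformity over $\Yset$ is gained.

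Finally, taking the infimum over $y$ and letting $K \to \infty$, monotone convergence yields
\[
\lim_{K\to\infty}\inf_{\Yset}\int_{C_K(y)} p_V(|y-b(x)|)\,\rmd x \geq c_b \int_{\rset^{d_X}} \bar p_V(v)\,\rmd v = c_b > 0 \eqsp,
\]
since $\bar p_V$ is a probability density. The only genuinely delicate step is the first one --- converting ``Jacobian bounded and bounded away from zero'' into the clean bi-Lipschitz inequalities and the pointwise lower bound on $|\det Db^{-1}|$; once $b$ is known to be a bi-Lipschitz bijection, the geometric inclusion and the change of variables are routine, and the key feature, that the resulting bound is independent of $y$, comes for free.
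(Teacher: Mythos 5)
Your proof is correct, but it takes a different route from the paper's. The paper argues by complementation: it notes that the full integral $\int_{\Xset} p_V(|y-b(x)|)\,\rmd x$ is bounded away from zero uniformly in $y$ by the same change of variables $u=b(x)$ that you use, and then shows the contribution of $C_K^c(y)$ is uniformly small, because for $|x-b^{-1}(y)|\geq K$ the lower Lipschitz bound gives $|y-b(x)|\geq b_-|b^{-1}(y)-x|\geq b_-K\geq M$ (this is \eqref{eq:util:FAR:item1}), so the monotonicity of $p_V$ on $[M,\infty)$ lets one dominate $\int_{C_K^c(y)}p_V(|y-b(x)|)\,\rmd x$ by a tail integral of $p_V$ that vanishes as $K\to\infty$. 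You instead bound the integral over $C_K(y)$ from below directly, by observing that $b(C_K(y))$ contains the ball of radius $b_-K$ about $y$ and pushing the integral forward; this is slightly more economical, since it uses neither the monotonicity of $p_V$ on the tail nor the integrability of its radial profile, only that $\bar p_V$ is a probability density and that the Jacobian of $b$ is bounded above. The paper's detour is not wasted, though: the inequality \eqref{eq:util:FAR:item1} established along the way is reused in the proof of Lemma \ref{lem:util:FAR}. Both arguments hinge on the same two ingredients --- extracting the bi-Lipschitz character of $b$ from ``Jacobian bounded and bounded away from zero'', and the uniformity in $y$ obtained after the substitution $u=b(x)$ --- and you are right that the first of these is the only delicate point; the paper passes over it with exactly the same implicit appeal to a global inverse that you make explicit.
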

We set $z_0= b^{-1}(y)$ in the definition \eqref{eq:definition-h-FAR} of $h_{C(y)}$ and $z_1= b^{-1}(y)$ in the definition \eqref{eq:definition-k-FAR}.
We denote
\begin{multline}
\label{eq:definition-I}
I_K(x_0,x_2;y) \eqdef [k_{C_K(y)}(x_0)]^{-1} [h_{C_K(y)}(x_2)]^{-1} \\
\times \int_{C_K^c(y)} p_U(|x_1-a(x_0)|) p_V(|y-b(x_1)|) p_U(|x_2 - a(x_1)|) \rmd x_1 \eqsp.
\end{multline}
The following Lemma shows that  $K$ may be chosen large enough so that $I_K(x_0,x_2,y)$ is uniformly bounded over $x_0,x_2$ and $y$.
\begin{lemma}
\label{lem:util:FAR}
\begin{equation}
\label{eq:key-property-FAR}
\limsup_{K \to \infty} \sup_{y \in \Yset} \sup_{(x_0,x_2) \in \Xset \times \Xset} I_K(x_0,x_2;y) < \infty \eqsp.
\end{equation}
\end{lemma}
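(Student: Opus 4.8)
The plan is to bound the integrand of $I_K$ pointwise, uniformly in $(x_0,x_2,y)$, by a function of the single scalar $r\eqdef|x_1-b^{-1}(y)|$ that matches the integrand of \eqref{eq:condition-tail}, and then to recognize the resulting bound as the tail of the finite integral $\Upsilon$. Write $c_y\eqdef b^{-1}(y)$; since $x_1\in C_K^c(y)$ we have $r=|x_1-c_y|>K$. First I would split the product inside the integral in \eqref{eq:definition-I} as
\[
[k_{C_K(y)}(x_0)]^{-1} p_U(|x_1 - a(x_0)|) \; \times \; p_V(|y-b(x_1)|) \; \times \; [h_{C_K(y)}(x_2)]^{-1} p_U(|x_2 - a(x_1)|),
\]
and bound the three factors separately.

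For the first factor I would distinguish the two regimes in the definition \eqref{eq:definition-k-FAR} of $k_{C_K(y)}$ (recall $z_1=c_y$). When $d(a(x_0),C_K(y))<M$ we have $k_{C_K(y)}(x_0)=1$ and the factor is at most $\sup p_U<\infty$. When $d(a(x_0),C_K(y))\ge M$ we have $k_{C_K(y)}(x_0)=p_U(|c_y-a(x_0)|)$, and the triangle inequality $|c_y-a(x_0)|\le r+|x_1-a(x_0)|$ together with the monotonicity of $p_U$ on $[M,\infty)$ and the submultiplicativity \eqref{eq:condition-noise} give
\[
p_U(|c_y-a(x_0)|) \ge p_U\big(r + |x_1-a(x_0)|\big) \ge \gamma\, p_U(r)\, p_U(|x_1-a(x_0)|),
\]
so that the factor is at most $\gamma^{-1}[p_U(r)]^{-1}$. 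Since $[p_U(r)]^{-1}\ge[\sup p_U]^{-1}$, both regimes are subsumed in a single bound $C_1[p_U(r)]^{-1}$ with $C_1\eqdef\gamma^{-1}\vee(\sup p_U)^2$.

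The third factor is handled identically using \eqref{eq:definition-h-FAR} with $z_0=c_y$: either $h_{C_K(y)}(x_2)=1$, or $h_{C_K(y)}(x_2)=p_U(|x_2-a(c_y)|)$, in which case the Lipschitz bound $|a(x_1)-a(c_y)|\le a_+ r$ yields $|x_2-a(c_y)|\le|x_2-a(x_1)|+a_+r$ and, as above, $p_U(|x_2-a(c_y)|)\ge\gamma\,p_U(a_+r)\,p_U(|x_2-a(x_1)|)$; this gives a bound $C_2[p_U(a_+r)]^{-1}$. For the middle factor, since the Jacobian of $b$ is bounded below by $b_-$ its inverse is $b_-^{-1}$-Lipschitz, whence $|y-b(x_1)|=|b(c_y)-b(x_1)|\ge b_- r$; provided $K\ge M/b_-$ this exceeds $M$, and monotonicity of $p_V$ gives $p_V(|y-b(x_1)|)\le p_V(b_- r)$.

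Collecting the three bounds, for $K\ge M/b_-$,
\[
\sup_{y}\,\sup_{(x_0,x_2)} I_K(x_0,x_2;y) \le C_1 C_2 \int_{\{|x_1-c_y|>K\}} [p_U(|x_1-c_y|)]^{-1}\, p_V(b_-|x_1-c_y|)\, [p_U(a_+|x_1-c_y|)]^{-1}\,\rmd x_1,
\]
and by translation invariance the right-hand side no longer depends on $y$; after the radial change of variables it is the tail over radii larger than $K$ of the integral defining $\Upsilon$ in \eqref{eq:condition-tail}. As $\Upsilon<\infty$ this tail tends to $0$, so the $\limsup$ is $0$ and a fortiori finite. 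I expect the main obstacle to be precisely the decoupling of the $(x_0,x_2)$-dependence in the first and third factors: the role of \eqref{eq:condition-noise} is exactly to turn the awkward ratios $p_U(|x_1-a(x_0)|)/p_U(|c_y-a(x_0)|)$ and $p_U(|x_2-a(x_1)|)/p_U(|x_2-a(c_y)|)$ into bounds that depend on $r$ alone, and one must verify throughout that the monotonicity of $p_U$ and $p_V$ is invoked only where its arguments exceed $M$, which is what forces the threshold $K\ge M/b_-$.
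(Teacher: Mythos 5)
Your proof is correct and follows essentially the same route as the paper's: the decoupling of the $x_0$- and $x_2$-dependence via the triangle inequality, the monotonicity of $p_U$ on $[M,\infty)$ and the submultiplicativity condition \eqref{eq:condition-noise} is exactly the content of the paper's inequalities \eqref{eq:util:FAR:item2:cond} and \eqref{eq:util:FAR:item3:cond}--\eqref{eq:util:FAR:item3:cond1}. You merely merge the paper's four-case analysis (according to whether $d(a(x_0),C_K(y))$ and $d(x_2,a[C_K(y)])$ exceed $M$) into a single pointwise domination of the integrand by the integrand of $\Upsilon$, which even yields that the $\limsup$ is $0$.
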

The proof is postponed to Section \ref{sec:proof:FAR}.

\section{Pairwise drift conditions}
\label{sec:ProxSet}

\subsection{The pair-wise drift condition}
In the situations where coupling over the whole state-space leads to trivial result, one may still
use the coupling argument, but this time over smaller sets. In such cases, however, we need a device to control the
return time of the joint chain to the set where the two chains are allowed to couple.
In this section we obtain results that are general enough to
include the autoregression model with Gaussian innovations and Gaussian measurement error.
Drift conditions are used to obtain bounds on the coupling time.
Consider the following drift condition.
\begin{defi}[Pair-wise drift conditions toward a set]
\label{def:pair-wise-drift-condition}
Let $n$ be an integer and $k \in \{0,\dots, n-1\}$ and let $\CS{k}{n}$ be a set valued function $\CS{k}{n}: \Yset^{n+1} \to \Xsigma \times \Xsigma$.
We say that the forward smoothing kernel $\FK{k}{n}$ satisfies  the \emph{pair-wise drift condition}
toward the set $\CS{k}{n}$ if  there exist functions $V_{k|n}: \Xset\times\Xset \times \Yset^{n+1} \to \rset$, $V_{k|n}\geq
1$, functions $\lambda_{k|n}: \Yset^{n+1} \to [0,1)$, $\rho_{i|n}: \Yset^{n+1} \to \rset^+$
such that, for any sequence $\chunk{y}{0}{n} \in \Yset^n$,
\begin{align}
\label{eq:pair-wise-drift-1}
& \RFK{k}{n} V_{k+1|n}(x,x') \leq  \rho_{k|n} && (x,x') \in \CS{k}{n} \\
\label{eq:pair-wise-drift-2}
& \bFK{k}{n}  V_{k+1|n}(x,x') \leq \lambda_{k|n} V_{k|n}(x,x') && (x,x') \not \in \CS{k}{n} \eqsp.
\end{align}
where $\RFK{k}{n}$ is defined in \eqref{eq:definition-residual} and $\bFK{k}{n}$ is defined in \eqref{eq:definition-product-kernel}.
\end{defi}
We set  $\eps_{k|n}= \eps_{k|n}(\CS{k}{n})$, the coupling constant of the set $\CS{k}{n}$, and we denote
\begin{equation}
\label{eq:definition-B}
B_{k|n} \eqdef 1 \vee \rho_{k|n} (1 - \eps_{k|n}) \lambda_{k|n} \eqsp.
\end{equation}
For any vector $\{ a_{i,n}\}_{1 \leq  i \leq n}$, denotes by $[\downarrow a]_{(i,n)}$ the  $i$-th largest order statistics,
\ie\ $[\downarrow a]_{(1,n)} \geq [\downarrow a]_{(2,n)} \geq \dots \geq [\downarrow a]_{(n,n)}$ and $[\uparrow a]_{(i,n)}$ the $i$-th smallest order statistics,
\ie\ $[\uparrow a]_{(1,n)} \leq [\uparrow a]_{(2,n)} \leq \dots \leq [\uparrow a]_{(n,n)}$.
\begin{thm}
\label{theo:coupling-drift}
Let $n$ be an integer. Assume that for each $k \in \{0, \dots, n-1\}$, there exists a set-valued function $\CS{k}{n}:
\Yset^{n+1} \to \Xsigma \otimes \Xsigma$ such that the forward smoothing kernel $\FK{k}{n}$ satisfies the pairwise drift condition
toward the set $\CS{k}{n}$. Then,
for any probability $\Xinit,\Xinit'$ on $(\Xset,\mathcal{B}(\Xset))$,
\begin{equation}
\label{eq:coupling-drift-1}
\tvnorm{\filt[\Xinit]{n} - \filt[\Xinit']{n}}  \leq  \min_{1 \leq m \leq n} A_{m,n}
\end{equation}
where
\begin{equation}
\label{eq:definition-Amn}
A_{m,n} \eqdef  \prod_{i=1}^m (1-[\uparrow \epsilon]_{(i|n)}) + \\
\prod_{i=0}^{n} \lambda_{i|n} \prod_{i=0}^m [\downarrow B]_{(i|n)} \Xinit \otimes \Xinit' (V_0)
\end{equation}
\end{thm}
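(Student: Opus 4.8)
The plan is to run the coupling construction introduced above with the one-step skeleton, so that the driving kernels are the $\FK{k}{n}$ themselves and the theorem's drift hypotheses \eqref{eq:pair-wise-drift-1}--\eqref{eq:pair-wise-drift-2} apply directly. Write $\PP^Y$ for $\PP^Y_{\Xinit\otimes\Xinit'\otimes\delta_0}$, let $Z_k=(\tilde X_k,\tilde X'_k,d_k)$, $T$ and $(d_k)$ be the associated chain, coupling time and bell variables, and $\mathcal F_k=\sigma(Z_0,\dots,Z_k)$. Since the bell is absorbing, $\{T>k\}=\{d_k=0\}$. By the Lindvall inequality \eqref{eq:lindvall-inequality} it suffices to bound $\PP^Y(T\ge n)$. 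I would count the coupling attempts up to time $n$,
\[
N_n\eqdef\sum_{k=0}^{n-1}\1\{(\tilde X_k,\tilde X'_k)\in\CS{k}{n},\,T>k\}\eqsp,
\]
and, for each threshold $m\in\{1,\dots,n\}$, split
\[
\PP^Y(T\ge n)=\PP^Y(T\ge n,\,N_n\ge m)+\PP^Y(T\ge n,\,N_n<m)\eqsp.
\]
The first term will be controlled by the minorization mechanism, the second by the pairwise drift; taking the minimum over $m$ then yields $A_{m,n}$.

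For the first term, $\{T\ge n\}$ forces every coupling attempt to fail (a tail flip). Listing the successive attempt times $\tau_1<\tau_2<\cdots$, the conditional failure probability at $\tau_j$ is exactly $1-\epsilon_{\tau_j|n}$, so successive conditioning on $\mathcal F_{\tau_j}$ gives
\[
\PP^Y(T\ge n,\,N_n\ge m)\le\PE^Y\Big[\prod_{j=1}^m(1-\epsilon_{\tau_j|n})\Big]\le\prod_{i=1}^m\bigl(1-[\uparrow\epsilon]_{(i|n)}\bigr)\eqsp,
\]
the last step bounding any product of $m$ distinct factors $1-\epsilon_{k|n}$ by that of the $m$ largest, \ie\ of the $m$ smallest $\epsilon$'s.

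For the second term I would build a drift supermartingale. Splitting on whether $(\tilde X_k,\tilde X'_k)\in\CS{k}{n}$ and using \eqref{eq:pair-wise-drift-2} off the set, \eqref{eq:pair-wise-drift-1} on it (where only a tail flip, of probability $1-\epsilon_{k|n}$, keeps $T>k+1$), together with $V_{k|n}\ge1$ and the definition \eqref{eq:definition-B} of $B_{k|n}$, yields the one-step estimate
\[
\PE^Y[\1\{T>k+1\}\,V_{k+1|n}(Z_{k+1})\mid\mathcal F_k]\le\lambda_{k|n}\,B_{k|n}^{\,\1\{(\tilde X_k,\tilde X'_k)\in\CS{k}{n}\}}\,\1\{T>k\}\,V_{k|n}(Z_k)\eqsp.
\]
Dividing by the $\mathcal F_{k-1}$-measurable product $\prod_{j<k}B_{j|n}^{\1\{\text{attempt at }j\}}$ and by $\prod_{j<k}\lambda_{j|n}$ turns the left side into a genuine supermartingale; iterating down to $k=0$ and using $\PE^Y[V_{0|n}(Z_0)]=\Xinit\otimes\Xinit'(V_0)$ gives
\[
\PE^Y\Big[\1\{T\ge n\}\,V_{n|n}(Z_n)\prod_{k=0}^{n-1}B_{k|n}^{-\1\{\text{attempt at }k\}}\Big]\le\prod_{k=0}^{n}\lambda_{k|n}\,\Xinit\otimes\Xinit'(V_0)\eqsp.
\]
On $\{N_n<m\}$ there are at most $m-1$ attempts, and since every $B_{k|n}\ge1$ the random product $\prod_kB_{k|n}^{\1\{\text{attempt at }k\}}$ is at most the product of the $m$ largest values, $\prod_{i=0}^m[\downarrow B]_{(i|n)}$; combining this with $V_{n|n}\ge1$ recovers the second term of $A_{m,n}$.

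The main obstacle is this second term: one must design a single functional that simultaneously carries the drift potential $V_{k|n}$, the not-yet-coupled event $\{T>k\}$, and a count of the attempts to $\CS{k}{n}$, so that it becomes a supermartingale, and then convert the resulting \emph{pathwise} product of the drift factors $B_{k|n}$ into a \emph{deterministic} order-statistic bound valid uniformly over where the (at most $m-1$) attempts fall. The coupling term is routine by comparison. Some care is also needed at the boundary---the precise relation between $\{T\ge n\}$ and the Lindvall index when the one-step skeleton is used, and the off-by-one in the range of the $\lambda_{k|n}$ product---but these do not affect the bound once the supermartingale and the two order-statistic estimates are in place.
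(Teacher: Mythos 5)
Your proposal follows essentially the same route as the paper's proof: the same decomposition of $\PP^Y(T\geq n)$ according to whether the number of visits to the coupling sets exceeds $m$, the same order-statistic bound $\prod_{i=1}^m(1-[\uparrow\epsilon]_{(i|n)})$ for repeated failed coupling attempts, and the same supermartingale $M_k$ built from $V_{k|n}$, the products of $\lambda_{j|n}$ and of $B_{j|n}^{-\1_{\bar C_j}}$, and the indicator $\1\{d_k=0\}$, with the pathwise product of the $B$'s converted to the deterministic product of the $m$ largest order statistics. (Your one-step estimate correctly requires $B_{k|n}=1\vee\rho_{k|n}(1-\eps_{k|n})\lambda_{k|n}^{-1}$, the form used in the paper's proof rather than the form printed in the definition.)
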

The proof is in section \ref{sec:proof:theo:coupling-drift}.

\begin{cor}
If there exists a sequence $\{ m(n) \}$ of integers satisfying,  $m(n) \leq n$ for any integer $n$,
$\lim_{n \to \infty} m(n) = \infty$, and, $\PP^Y$-\as\,
\[
\limsup \left( \sum_{i=0}^{m(n)} \log (1-[\uparrow \epsilon]_{(i|n)}) +  \sum_{i=0}^n \log \lambda_{i|n} +  \sum_{i=0}^{m(n)} \log [\downarrow B_{(i,n)}] \right) = - \infty \eqsp,
\]
then
\[
\limsup_n \tvnorm{\filt[\Xinit]{n} - \filt[\Xinit']{n}}  \cas 0 \eqsp, \quad \PP^Y-\as\ \eqsp.
\]
\end{cor}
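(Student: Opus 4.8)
The plan is to feed the sequence $\{m(n)\}$ into the finite-$n$ bound of Theorem~\ref{theo:coupling-drift} and then let $n\to\infty$ along $\PP^Y$-almost every observation trajectory. Since $m(n)\le n$, taking $m=m(n)$ in $\min_{1\le m\le n}A_{m,n}$ gives, for every $n$,
\[
\tvnorm{\filt[\Xinit]{n}-\filt[\Xinit']{n}}\le A_{m(n),n}\eqsp,
\]
so it suffices to prove that $A_{m(n),n}\to 0$ as $n\to\infty$, $\PP^Y$-a.s. I would write $A_{m(n),n}=T_1^{(n)}+T_2^{(n)}$ with $T_1^{(n)}=\prod_{i=1}^{m(n)}(1-[\uparrow\epsilon]_{(i|n)})$ and $T_2^{(n)}=\prod_{i=0}^n\lambda_{i|n}\prod_{i=0}^{m(n)}[\downarrow B]_{(i|n)}\,\Xinit\otimes\Xinit'(V_0)$, and, since both summands are nonnegative, reduce the problem to showing $T_1^{(n)}\to 0$ and $T_2^{(n)}\to 0$ separately.

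Next I would take logarithms and match each summand against the three sums appearing in the hypothesis. For $T_1^{(n)}$ the factors lie in $[0,1]$, so $\log T_1^{(n)}=\sum_{i=1}^{m(n)}\log(1-[\uparrow\epsilon]_{(i|n)})\le 0$, which differs from the first hypothesis sum $\sum_{i=0}^{m(n)}\log(1-[\uparrow\epsilon]_{(i|n)})$ only by the nonpositive $i=0$ term. For $T_2^{(n)}$ one has $\log T_2^{(n)}=\sum_{i=0}^n\log\lambda_{i|n}+\sum_{i=0}^{m(n)}\log[\downarrow B]_{(i|n)}+\log\big(\Xinit\otimes\Xinit'(V_0)\big)$, which is exactly the second and third hypothesis sums plus the normalizing log-moment. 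The target is then to deduce from the hypothesis that $\log T_1^{(n)}\to-\infty$ and $\log T_2^{(n)}\to-\infty$, whence $A_{m(n),n}\to 0$ and the stated $\PP^Y$-a.s.\ convergence follows.

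The step I expect to be the main obstacle is precisely that last deduction: a single sum of mixed-sign pieces diverging to $-\infty$ does not, in general, force each product $T_1^{(n)}$ and $T_2^{(n)}$ to vanish on its own, since the $\epsilon$- and $\lambda$-sums are nonpositive while the $B$-sum is nonnegative (because $B_{k|n}=1\vee\rho_{k|n}(1-\eps_{k|n})\lambda_{k|n}\ge 1$). One must therefore exploit the comparability built into this definition of $B_{k|n}$—pairing each $\log\lambda_{i|n}$ with the matching $\log B_{k|n}$ so that the nonnegative $B$-contributions are dominated by the $\lambda$-contributions—and one must control the normalizing factor $\Xinit\otimes\Xinit'(V_0)$, which depends on $n$ and on the random observations through the drift function $V_0=V_{0|n}$. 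Establishing that this factor is $\PP^Y$-a.s.\ finite and negligible relative to the divergent sum, and that the two products degrade \emph{together}, is where the real work lies; the remainder is routine bookkeeping with the order statistics and with monotonicity of $A_{m,n}$ in $m$.
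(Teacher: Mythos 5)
The paper states this corollary without proof, treating it as an immediate consequence of Theorem~\ref{theo:coupling-drift}, and your overall route --- take $m=m(n)$ in $\min_{1\le m\le n}A_{m,n}$, split $A_{m(n),n}=T_1^{(n)}+T_2^{(n)}$, and take logarithms --- is exactly the intended one. The bookkeeping in your first two paragraphs is fine (including the harmless $i=0$ index discrepancy and the observation that $\Xinit\otimes\Xinit'(V_0)$ must be assumed finite for the bound to be non-vacuous).

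However, the obstacle you flag in your last paragraph is a genuine gap, and your proposal does not close it. The hypothesis asserts that $S_1^{(n)}+S_2^{(n)}\to-\infty$, where $S_1^{(n)}=\log T_1^{(n)}$ (up to the $i=0$ term) and $S_2^{(n)}=\log T_2^{(n)}-\log\bigl(\Xinit\otimes\Xinit'(V_0)\bigr)$; that is, it controls the logarithm of the \emph{product} $T_1^{(n)}T_2^{(n)}$, whereas $A_{m(n),n}$ is the \emph{sum} $T_1^{(n)}+T_2^{(n)}$. Since $S_1^{(n)}\le0$ while the $B$-contribution to $S_2^{(n)}$ is nonnegative (as $B_{k|n}\ge1$), nothing in the hypothesis rules out the scenario $S_1^{(n)}=-2a_n$ and $S_2^{(n)}=+a_n$ with $a_n\to\infty$: the combined sum tends to $-\infty$, yet $T_2^{(n)}\to\infty$ and the bound $A_{m(n),n}$ is useless. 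Your suggested remedy --- ``pairing each $\log\lambda_{i|n}$ with the matching $\log B_{k|n}$'' --- cannot work in general either, because $B_{k|n}=1\vee\rho_{k|n}(1-\eps_{k|n})\lambda_{k|n}$ involves the unbounded factor $\rho_{k|n}$, so the $B$-terms need not be dominated by the $\lambda$-terms. To make the argument go through one must either read the hypothesis as two separate conditions (each of $S_1^{(n)}$ and $S_2^{(n)}$ tends to $-\infty$ $\PP^Y$-a.s.), which is almost certainly what the authors intend and is how the second corollary's rate version is used in the Gaussian example, or impose an explicit domination of $\sum\log[\downarrow B]_{(i|n)}$ by $-\sum\log\lambda_{i|n}$. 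As written, your proof stops exactly at the point where the statement needs either this reinterpretation or an additional assumption, so it is incomplete rather than merely unfinished bookkeeping.
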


\begin{cor}
If there exists a sequence $\{ m(n) \}$ of integers such that  $m(n) \leq n$ for any integer $n$,
$\liminf m(n)/n = \alpha > 0$ and $\PP^Y$-\as\,
\[
\limsup \left( \frac1n\sum_{i=0}^{m(n)} \log (1-[\uparrow \epsilon]_{(i|n)}) + \frac 1n \sum_{i=1}^n \log \lambda_{i|n} + \frac 1n \sum_{i=1}^{n-m(n)} \log [\downarrow B_{(i|n)}] \right) \leq -\lm \eqsp,
\]
then  there exists $\nu \in (0,1)$ such that
\[
\nu^{-n} \tvnorm{\filt[\Xinit]{n}-\filt[\Xinit']{n}} \cas 0 \eqsp, \quad \PP^Y-\as\ \eqsp.
\]
\end{cor}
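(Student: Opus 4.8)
The plan is to obtain the statement as the quantitative (geometric) sharpening of the preceding corollary, by feeding the prescribed sequence $\{m(n)\}$ into Theorem~\ref{theo:coupling-drift} and then extracting an exponential rate from the logarithm of the resulting bound. First I would invoke Theorem~\ref{theo:coupling-drift} with the choice $m = m(n)$: this is legitimate for all $n$ large enough, since $\liminf m(n)/n = \alpha > 0$ forces $m(n) \to \infty$, hence $1 \le m(n) \le n$ eventually. The theorem then gives
\[
\tvnorm{\filt[\Xinit]{n} - \filt[\Xinit']{n}} \le A_{m(n),n} = T_1 + T_2 \eqsp,
\]
where $T_1 \eqdef \prod_{i=1}^{m(n)}(1 - [\uparrow \epsilon]_{(i|n)})$ and $T_2 \eqdef \prod_{i=0}^{n} \lambda_{i|n} \prod_{i=0}^{m(n)} [\downarrow B]_{(i|n)} \, \Xinit \otimes \Xinit'(V_0)$. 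It thus suffices to produce a single $\nu \in (0,1)$ with $\nu^{-n} A_{m(n),n} \to 0$, $\PP^Y$-\as

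Second, I would pass to logarithms. Using $\log(a+b) \le \log 2 + (\log a) \vee (\log b)$,
\[
\tfrac1n \log A_{m(n),n} \le \tfrac{\log 2}{n} + \left( \tfrac1n \log T_1 \right) \vee \left( \tfrac1n \log T_2 \right) \eqsp,
\]
so it is enough to establish $\limsup_n \frac1n \log T_1 < 0$ and $\limsup_n \frac1n \log T_2 < 0$, $\PP^Y$-\as; any $\nu\in(0,1)$ whose logarithm exceeds the larger of these two $\limsup$'s then yields $\nu^{-n}A_{m(n),n}\to 0$. Here $\frac1n \log T_1 = \frac1n \sum_{i=1}^{m(n)} \log(1 - [\uparrow \epsilon]_{(i|n)})$ is controlled by the first average in the hypothesis, while $\frac1n \log T_2 = \frac1n \sum_{i=0}^n \log \lambda_{i|n} + \frac1n \sum_{i=0}^{m(n)} \log [\downarrow B]_{(i|n)} + \frac1n \log \Xinit \otimes \Xinit'(V_0)$ is controlled by the second and third averages; the last summand is $O(1/n)$ and vanishes as soon as $V_0$ is $\Xinit \otimes \Xinit'$-integrable, which I would record as a standing requirement. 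To compare the $B$-products one exploits the monotonicity of the order statistics together with $B_{i|n} \ge 1$ (extending a product over the largest $B$'s only enlarges it), matching $\prod_{i=0}^{m(n)}[\downarrow B]_{(i|n)}$ against the hypothesis range $\sum_{i=1}^{n-m(n)}\log[\downarrow B]_{(i|n)}$ when $m(n)+1 \le n - m(n)$, and $\liminf m(n)/n > 0$ guarantees that $T_1$ carries a linearly growing number of factors, hence a strictly negative exponent.

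The main obstacle is exactly this last comparison: the hypothesis constrains only a single combined order-statistic average, whereas the conclusion requires $T_1$ and $T_2$ to decay \emph{separately}. Disentangling them is delicate because the three contributions have opposite signs — $\log(1 - [\uparrow\epsilon]_{(i|n)}) \le 0$ and $\log \lambda_{i|n} \le 0$, but $\log[\downarrow B]_{(i|n)} \ge 0$ — so one cannot merely discard terms, and one must argue that each of the two nonpositive averages dominates its own product while the nonnegative $B$-average is absorbed over the correct range of order statistics; this is also the point at which the slight discrepancy between the index $n-m(n)$ appearing in the hypothesis and the index $m(n)$ appearing in $T_2$ must be reconciled through $m(n)+1\le n-m(n)$.

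Finally, care is needed throughout because $\epsilon_{i|n}, \lambda_{i|n}, B_{i|n}$ and $V_0$ are all functions of the observation trajectory, so every inequality and every $\limsup$ must be read in the $\PP^Y$-\as\ sense; the passage from the deterministic estimate $A_{m(n),n}$ to the a.s. exponential rate is precisely where the hypothesis on the observation-dependent order statistics does its work, and the resulting $\nu \in (0,1)$ depends (measurably) on the realized observation sequence.
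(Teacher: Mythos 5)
The paper gives no proof of this corollary, so your plan --- apply Theorem \ref{theo:coupling-drift} with $m=m(n)$, write $A_{m(n),n}=T_1+T_2$, use $\log(a+b)\le\log 2+(\log a)\vee(\log b)$, and extract a negative exponential rate for each term --- is certainly the intended route, and the outer scaffolding (eventual validity of $1\le m(n)\le n$, absorption of $\tfrac1n\log\Xinit\otimes\Xinit'(V_0)$, the a.s.\ reading of every estimate) is fine. However, the step you yourself flag as ``the main obstacle'' is not a delicate bookkeeping point to ``argue'': it is a genuine gap that cannot be closed from the hypothesis as stated. Put $a_n=\tfrac1n\sum_{i=0}^{m(n)}\log(1-[\uparrow\epsilon]_{(i|n)})\le 0$, $b_n=\tfrac1n\sum_{i=1}^{n}\log\lambda_{i|n}\le 0$, $c_n=\tfrac1n\sum_{i=1}^{n-m(n)}\log[\downarrow B]_{(i|n)}\ge 0$. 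The hypothesis only bounds $\limsup(a_n+b_n+c_n)\le-\lambda$, and this is compatible with, say, $a_n\equiv\alpha\log(1-\epsilon)$, $b_n\to 0$, $c_n\equiv 0$ (take $\epsilon_{i|n}\equiv\epsilon$, $\lambda_{i|n}=1-n^{-2}$, $B_{i|n}\equiv 1$); then $T_2\to\Xinit\otimes\Xinit'(V_0)\ge 1$ and $A_{m(n),n}$ does not even tend to $0$. Since two of the three averages are nonpositive and the third is nonnegative, no rearrangement of order statistics extracts separate strictly negative $\limsup$'s for $a_n$ and for $b_n+\tfrac1n\sum_{i=0}^{m(n)}\log[\downarrow B]_{(i|n)}$ from a bound on their sum. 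To complete the proof you must either read the hypothesis as two separate conditions (one for each term of $A_{m(n),n}$, which is almost surely the authors' intent, the displayed sum being $\log T_1+\log T_2$ written with the abuse $\log(T_1+T_2)\approx\log T_1+\log T_2$), or state such a strengthening explicitly. Your write-up presents the disentangling as feasible; it is not, and the proof as proposed does not go through.

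Two smaller points in the same vein. First, your reconciliation of the index ranges via $m(n)+1\le n-m(n)$ is an unstated extra restriction: it forces $\limsup m(n)/n\le 1/2$, which does not follow from $\liminf m(n)/n=\alpha>0$ (most plausibly the $n-m(n)$ in the statement is a typo for $m(n)$, matching the preceding corollary and the theorem, in which case nothing needs reconciling --- but then you should say so rather than lean on an inequality that may fail). Second, the closing claim that $\liminf m(n)/n>0$ ``guarantees that $T_1$ carries a linearly growing number of factors, hence a strictly negative exponent'' is not correct on its own: the smallest order statistics $[\uparrow\epsilon]_{(i|n)}$ may be $0$ for a positive fraction of indices, and only the (separated) hypothesis on $a_n$ makes $T_1$ decay geometrically. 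Once the hypothesis is read as two separate bounds, your argument does yield the conclusion with any $\nu\in(\mathrm{e}^{-\delta},1)$ for $\delta$ strictly between $0$ and the common negative $\limsup$; note that $\nu$ can then be taken deterministic, not observation-dependent as your last sentence suggests.
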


\subsection{Examples}

\subsubsection{Gaussian autoregression}
\label{sec:GaussianAR}
Let
\begin{align*}
    X_i &= \al X_{i-1} + \sigma U_i\\
    Y_i &= X_i+ \tau V_i
\end{align*}
where $|\al|<1$ and $\{U_i\}_{i\geq 0}$ and $\{V_i\}$ are \iid\ standard Gaussian
and are independent from $X_0$. Let $n$ be an integer and $k \in \{0, \dots, n-1\}$.
The backward functions are given by
\begin{equation}
\label{eq:backward-functions}
\backvar{k}{n}(x) \propto \exp\left( - (\alpha x-m_{k|n})^2/(2 \rho^2_{k|n}) \right) \eqsp,
\end{equation}
where $m_{k|n}$ and $\rho_{k|n}$ can be computed for $k= \{0, \dots, n-2 \}$ using the following backward recursions (see \eqref{eq:rec:beta})
\begin{align}
\label{eq:definition-m-rho}
& m_{k|n}= \frac{\rho^2_{k+1|n} Y_{k+1} + \alpha \tau^2 m_{k+1|n}}{\rho_{k+1|n}^2 + \alpha^2 \tau^2} \eqsp,
& \rho^2_{k|n} = \frac{(\tau^2 + \sigma^2) \rho_{k+1|n}^2  + \alpha^2 \sigma^2 \tau^2}{\rho_{k+1|n}^2 + \alpha^2 \tau^2} \eqsp.
\end{align}
initialized with $m_{n-1|n}= Y_n$ and $\rho_{n-1|n}= \sigma^2+\tau^2$.
The conditional transition kernel $\FK{i}{n}(x,\cdot)$ has a density \wrt\ to the Lebesgue measure given by
$\phi(\cdot;\mu_{i|n}(x),\gamma^2_{i|n})$, where $\phi(z;\mu,\sigma^2)$ is the density of a Gaussian random variable with mean $\mu$ and variance $\sigma^2$ and
\begin{align*}
&\mu_{i|n}(x)= \frac{\tau^2 \rho_{i+1|n}^2 \alpha x + \sigma^2 \rho_{i+1|n}^2 Y_{i+1} + \sigma^2 \alpha \tau m_{i+1|n}}{(\sigma^2 + \tau^2) \rho_{i+1|n}^2 + \tau^2 \alpha^2 \sigma^2} \eqsp, \\
&\gamma^2_{i|n}= \frac{\sigma^2 \tau^2 \rho_{i+1|n}^2}{(\tau^2 + \sigma^2) \rho^2_{i+1|n} + \alpha^2 \tau^2 \sigma^2} \eqsp.
\end{align*}
From \eqref{eq:definition-m-rho}, it follows  that for any $i \in \{0, \dots, n-1 \}$,
$\sigma^2 \leq \rho_{i|n}^2 \leq \sigma^2+\tau^2$. This implies that, for any $(x,x') \in \Xset \times \Xset$, and any $i \in \{0, \dots, n-1 \}$,
the function $\mu_{i|n}$ is Lipshitz and with Lipshitz constant which is uniformly bounded by some $\beta < |\alpha|$,
\begin{equation}
\label{eq:definition-beta}
|\mu_{i|n}(x) - \mu_{i|n}(x')| \leq \beta  |x-x'| \eqsp, \quad \beta \eqdef |\alpha| \frac{\tau^2 (\sigma^2+\tau^2)}{(\sigma^2+\tau^2)^2+\tau^2 \alpha^2 \sigma^2} \eqsp,
\end{equation}
and that the variance is uniformly bounded
\begin{equation}
\label{eq:definition-gamma}
\gamma^2_- \eqdef \frac{\sigma^2 \tau^2}{(1+\alpha^2)\tau^2+\sigma^2} \leq \gamma^2_{i|n} \leq \gamma^2_+ \eqdef \frac{\sigma^2 \tau^2 (\sigma^2+\tau^2)}{(\tau^2 + \sigma^2)^2 + \alpha^2 \tau^2 \sigma^2} \eqsp.
\end{equation}
Therefore, for any $c<\en$, all sets of the form
\begin{equation}
\label{eq:defintion-coupling-set}
C \eqdef \left\{(x,x') \in \Xset \times \Xset :\; |x-x'| \leq c \right\} \eqsp,
\end{equation}
are coupling sets. Note indeed that, for any $i \in \{0, \dots, n-1\}$,
\[
\frac{1}{2} \tvnorm{\FK{i}{n}(x,\cdot) - \FK{i}{n}(x',\cdot)} =
2 \mathrm{erf} \left( \gamma_{i|n}^{-1} |\mu_{i|n}(x)-\mu_{i|n}(x')| \right) \leq 2 \mathrm{erf}(\gamma_-^{-1} \beta c) \eqsp,
\]
where $\mathrm{erf}$ is the error function.
More precisely, for any $(x,x') \in C$ and any integer $n$ and any $i \in \{0,\dots,n-1\}$,
\begin{equation}
\label{eq:definition-epsilon-1}
\FK{i}{n}(x,A) \wedge \FK{i}{n}(x',A) \geq \eps \nu_{i,1|n}(x,x';A) \eqsp, \text{where} \; \eps \eqdef \left(1-2\mathrm{erf}(\gamma_-^{-1} \beta c)\right) \eqsp,
\end{equation}
and $\nu_{i,1,n}$ is defined as in \eqref{eq:minorizing-probability}. For $c$ large enough, the drift condition is satisfied with
$V(x,x')=1+(x-x')^2$:
\[
\bFK{i}{n} V(x,x') =  1+ \left\{\mu_{i|n}(x) - \mu_{i|n}(x')\right\}^2 + 2 \gamma_{i|n}^2 \leq 1 + \beta^2 |x-x'|^2 + \gamma_+^2 \eqsp.
\]
The condition \eqref{eq:pair-wise-drift-1} with
\begin{equation}
\label{eq:definition-rho}
\rho_{i|n} \leq  \rho \eqdef (1-\epsilon)^{-1} \left(1 + \beta^2 c^2 + \gamma_+^2 \right) \eqsp,
\end{equation}
where $c$ is the width of the coupling set in \eqref{eq:defintion-coupling-set}.
The condition \eqref{eq:pair-wise-drift-2} is satisfied with $\lambda_{i|n}= \tilde{\beta}^2$
for any $\tilde{\beta}$ and $c$ satisfying $\beta < \tilde{\beta} < 1$  and $c^2>(1-\tilde{\beta}^2+\gamma_+^2)/(\tilde{\beta}^2- \beta^2)$.
it is worthwhile to note that all these bounds are uniform \wrt\ $n$, $i \in \{0, \dots, n-1 \}$ and realization of the
observations $\chunk{y}{0}{n}$. Therefore, for any $m \in \{0,\dots, n\}$, we may take upper bound $A_{m,n}$ (defined in \eqref{eq:definition-Amn}) by
\[
A_{m,n} \leq (1-\eps)^m + B^m \tilde{\beta}^{2n} (1 + 2 \int \Xinit(dx) x^2 + 2 \int \Xinit'(dx) x^2)
\]
with $B= 1 \vee \rho (1-\eps)\tilde{\beta}^2$, where $\eps$ is defined in \eqref{eq:definition-epsilon-1}, $\rho$ is defined in \eqref{eq:definition-rho}.
Taking $m= [\delta n]$ for some $\delta > 0$ such that $B^\delta \tilde{\beta}^2 < 1$, this upper bound may be shown to go to zero exponentially
fast and uniformly \wrt\ the observations $\chunk{y}{0}{n}$.

\subsubsection{State space models with strongly modal distributions}
The Gaussian example can be generalized to the more general
case where the distribution of the state noise and the measurement noise
are strongly unimodal. Recall that a density is strongly modal if  the log of
its density is concave.

First note that if $f$ and $g$ are two strongly unimodal density,
then the density $h= fg/\int fg$ is
also strongly unimodal, with mode that lies between the two modes;
its second-order derivative of $\log h$ is  smaller that the sum
of the second-order derivative of $\log f$ and $\log g$. Let the state noise density be denoted
by $p_U(\cdot)=\rme^{\varphi(\cdot)}$ and that of the
measurements' errors be $p_V(\cdot)=\rme^{\psi(\cdot)}$.
Define by  the recursion operating on the decreasing indices
\begin{equation}
\label{eq:definition-bar-beta}
\bar{\beta}_{i|n}(x) = p_V(y_i-x) \int q(x,x_{i+1}) \bar{\beta}_{i+1|n}(x_{i+1}) \rmd x_{i+1} \eqsp,
\end{equation}
with the initial condition $\bar{\beta}_{n|n}(x)= p_V(y_n-x)$. These functions are the conditional
distribution of the observations $\chunk{Y}{i}{n}$ given $X_i=x$. They are related to the backward function
through the relation $\bar{\beta}_{i|n}(x)\eqdef \backvar{i}{n}(x) p_V(y_i-x)$. We denote $\psi_{i|n}(x) \eqdef \log \bar{\beta}_{i|n}(x)$.
Now,
\[
\psi_{i|n}(x) = \psi(Y_i-x) +\log \int p_U(z-\al x) \bar{\beta}_{i+1|n}(z) \rmd z \eqsp.
\]
Under the stated assumptions, the forward
smoothing kernel $\FK{i}{n}$ has a density with respect to the Lebesgue measure which is given by
\begin{multline}
\label{eq:forward-smoothing-density}
\fk{i}{n}(x_i, x_{i+1}) = \\
p_U(x_{i+1} -\alpha x_i) \bar{\beta}_{i+1|n}(x_{i+1})/ \int p_U(z - \alpha x_i) \bar{\beta}_{i+1|n}(z) \rmd z \eqsp.
\end{multline}
Denote by $\widetilde\PCov_{i|n,x}$  the covariance function \wrt\
the forward smoothing kernel density.
We recall that for any probability distribution $\PP$ on $(\Xset,\Xsigma)$ and any two increasing measurable functions $f$ and $g$
which are square integrable \wrt\ $\PP$, the covariance of $f$ and $g$ \wrt\ $\PP$,
is non-negative. Hence,
 \eqsplit[sum_induc]{
    \psi_{i|n}&''(x)
    \\
    &=\psi''(Y_i-x) + \al^2\frac{\int p_U''(z-\al x) \bar{\beta}_{i+1|n}(z)\,dz} {\int p_U(z-\al x) \bar{\beta}_{i+1|n}(z)\,dz }
     -\al^2\Bigl( \frac{\int p_U'(z-\al x) \bar{\beta}_{i+1|n}(z)\,dz}{\int p_U(z-\al x)
     \bar{\beta}_{i+1|n}(z)\,dz}\Bigr)^2
     \\
     &=\psi''(Y_i-x) - \al^2\frac{\int p_U'(z-\al x) \bar{\beta}_{i+1|n}'(z)\,dz} {\int p_U(z-\al x) \bar{\beta}_{i+1|n}(z)\,dz }
     \\
     &\hspace{3em}+\al^2\Bigl( \frac{\int p_U'(z-\al x) \bar{\beta}_{i+1|n}(z)\,dz}{\int p_U(z-\al x)
     \bar{\beta}_{i+1|n}(z)\,dz}\Bigr)\Bigl( \frac{\int p_U(z-\al x) \bar{\beta}'_{i+1|n}(z)\,dz}{\int p_U(z-\al x)
     \bar{\beta}_{i+1|n}(z)\,dz}\Bigr)
     \\
    &=\psi''(Y_i-x) - \al^2 \widetilde\PCov_{i|n,x} \bigl( \varphi'(\cdot-\al x), \psi'_{i+1|n}(\cdot)    \bigr)
    \\
    &\leq \psi''(Y_i-x),
  }
where we used a direct differentiation, integration by parts, and
the fact that both $\phi'$ and $\psi_{i+1|n}'$ are monotone
non-increasing functions (the last statement follows by applying
\eqref{sum_induc} inductively from $n$ backward).

We conclude that $\psi_{i|n}$ is strongly unimodal with curvature at
least as that of the original likelihood function.
Hence the curvature of the logarithm of the forward smoothing density is smaller than the sum of the curvature of the state and of the
measurement noise,
\begin{equation}
\label{eq:curvature-log-forward-smoothing-density}
\left[\log \fk{i}{n}(x_i,x_{i+1}) \right]'' \leq  \varphi''(x_{i+1} - \alpha x_i) + \psi''(Y_{i+1} - x_{i+1}) \leq - c  \eqsp,
\end{equation}
where
\begin{equation}
\label{eq:definition-c}
c = - \max_{x_{i+1}} \varphi''(x_{i+1}) +  \max_{x_{i+1}} \psi''(x_{i+1}) \eqsp.
\end{equation}
Lemma \ref{lem:varSU} shows that the variance of $X_{i+1}$ given $X_i$ and $\chunk{Y}{i+1}{n}$ is uniformly bounded
\[
v_{i|n}(x) \eqdef \int \left( x_{i+1} - \int x_{i+1} \fk{i}{n}(x,x_{i+1}) \rmd x_{i+1} \right)^2 \fk{i}{n}(x,x_{i+1}) \rmd x_{i+1}
\leq c^{-1} \eqsp.
\]
where $c$ is defined in \eqref{eq:definition-c}. Now let
\[
e_{i|n}(x) \eqdef \int x_{i+1} \fk{i}{n}(x,x_{i+1}) \rmd x_{i+1} \eqsp.
\]
Similarly as above
\[
    \frac{\rmd e_{i|n}}{\rmd x} (x) = -\alpha\,\widetilde\PCov_{i|n,x}\left(Z,\varphi'(Z-\alpha x) \right) \eqsp.
\]
Note that $x_{i+1} \mapsto e_{i|n}(x)-x_{i+1}$, $x_{i+1} \mapsto \varphi'(x_{i+1}-\alpha x)$, and $x_{i+1} \mapsto \psi'_{i+1|n}(x_{i+1})$ are
monotone non-increasing and therefore their correlation is positive \wrt\ any probability measure. Hence
\begin{align*}
&\left|\frac{\rmd e_{i|n}}{\rmd x} (x) \right|\\
&  =|\alpha|\frac{\int \left(e_{i|n}(x)-x_{i+1} \right) \varphi'(x_{i+1} - \alpha x) \rme^{\varphi(x_{i+1}-\alpha x)+\psi_{i+1|n}(x_{i+1})}\rmd x_{i+1}}
    {\int \rme^{\varphi(x_{i+1}-\al x)+\psi_{i+1|n}(x_{i+1})} \rmd x_{i+1}} \\
&  \leq |\alpha| \frac{\int \left(e_{i|n}(x)-x_{i+1}\right) \left(\varphi'(x_{i+1}-\alpha x)+\psi'_{i+1|n}(x_{i+1})\right)
    \rme^{\phi(x_{i+1}-\alpha x)+\psi_{i+1|n}(x_{i+1})} \rmd x_{i+1}} {\int \rme^{\varphi(x_{i+1}-\alpha x)+\psi_{i+1|n}(x_{i+1})} \rmd x_{i+1}}
    \\
&  = |\alpha| \eqsp.
\end{align*}
by integration by parts. Put as before $V(x,x')= 1 + (x-x')^2$.
It follows from the discussion above that
\begin{align*}
\bFK{i}{n}V(x,x')=  1 + (e_{i|n}(x) - e_{i|n}(x'))^2 + v_{i|n}(x) + v_{i|n}(x') \eqsp,
\end{align*}
where $v_{i|n}(x)$ and $v_{i|n}(x')$ are uniformly bounded \wrt\ $x$ and $x'$ and
$|e_{i|n}(x) - e_{i|n}(x')| \leq \alpha |x-x'|$.  The rest of
the argument is like that for the normal-normal case.

We conclude the argument by stating and proving a lemma which was used above.
\begin{lemma}
\label{lem:varSU}
Suppose that $Z$ is a random variable with probability density function  $f$ satisfying
$\sup_x(\partial^2/\partial x^2)\log f \leq -c$. Then, $Z$ is square integrable and $\PVar(Z) \leq c^{-1}$.
\end{lemma}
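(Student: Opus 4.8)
The statement is the one-dimensional Brascamp--Lieb inequality for a strongly log-concave density, and the plan is to prove it directly by two integrations by parts, after first settling integrability. Throughout write $\phi \eqdef \log f$, so that the hypothesis reads $\phi'' \leq -c$.

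First I would establish square-integrability. Fixing a base point $x_0$ and expanding $\phi$ to second order with the curvature bound gives $\phi(x) \leq \phi(x_0) + \phi'(x_0)(x-x_0) - \tfrac{c}{2}(x-x_0)^2$, so $f(x) \leq A\,\rme^{Bx - \frac{c}{2}(x-x_0)^2}$ for suitable constants $A,B$. This Gaussian-type tail makes every polynomial moment finite; in particular $\PVar(Z) < \infty$ and $m \eqdef \PE[Z]$ is well defined. The same decay is what will annihilate all the boundary terms below.

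Next I would introduce the antiderivative
\[
G(x) \eqdef \int_{-\infty}^x (t-m) f(t)\, \rmd t \eqsp,
\]
so that $G'(x) = (x-m) f(x)$ and $G(-\infty) = G(+\infty) = 0$, the latter because $\PE[Z] = m$. Since $G'$ is negative on $\{x < m\}$ and positive on $\{x > m\}$, the function $G$ decreases then increases between two vanishing endpoints, hence $G \leq 0$ everywhere. One integration by parts gives $\PVar(Z) = \int (x-m) G'(x)\,\rmd x = \int (-G(x))\,\rmd x$. Setting $J \eqdef \int (x-m)\phi'(x) f(x)\,\rmd x$, the relation $f' = \phi' f$ yields $J = \int (x-m) f'(x)\,\rmd x = -1$ after one integration by parts, while integrating $\int \phi'(x) G'(x)\,\rmd x$ by parts yields $J = \int \phi''(x)(-G(x))\,\rmd x$.

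Finally I would combine these identities: since $-G \geq 0$ and $\phi'' \leq -c$,
\[
-1 = J = \int \phi''(x)\,(-G(x))\,\rmd x \leq -c \int (-G(x))\,\rmd x = -c\,\PVar(Z) \eqsp,
\]
which rearranges to $\PVar(Z) \leq c^{-1}$. The hard part is purely analytic rather than conceptual: justifying that the boundary terms $(x-m) G(x)$, $(x-m) f(x)$ and $\phi'(x) G(x)$ vanish at $\pm\infty$ --- which follows from the Gaussian decay of the first step together with the at-most-linear growth of $\phi'$ --- and accommodating the implicit smoothness of $f$. If $f$ is only assumed log-concave with the stated curvature bound in a weak sense, I would first prove the inequality for a smoothed density and then pass to the limit.
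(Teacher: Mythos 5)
Your proof is correct, but it takes a genuinely different route from the paper's. You run the classical double integration by parts behind the one-dimensional Brascamp--Lieb inequality: with $G(x)\eqdef\int_{-\infty}^x(t-m)f(t)\,\rmd t\leq 0$ you obtain $\PVar(Z)=\int(-G)$ and $-1=\int\phi''\,(-G)$, and the pointwise curvature bound $\phi''\leq-c$ finishes the job. The paper instead writes $\log f(z)=\xi(z)-cz^2/2$ with $\xi$ concave, splits $\PVar(Z)=c^{-1}\int(z-m)\bigl(-f'(z)\bigr)\rmd z+c^{-1}\int(z-m)\xi'(z)f(z)\,\rmd z$, evaluates the first (Gaussian) piece to $c^{-1}$ by a single integration by parts, and discards the second because it equals $c^{-1}\PCov\bigl(Z,\xi'(Z)\bigr)\leq0$ as the covariance of a non-decreasing and a non-increasing function --- the same positive-association lemma the paper invokes repeatedly in the strongly unimodal section. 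Your version avoids the correlation inequality entirely at the price of a second integration by parts and the sign analysis of $G$; the paper's version makes the comparison with the Gaussian of variance $c^{-1}$ explicit and reuses a tool already in play. One small repair: your justification of the boundary term $\phi'(x)G(x)\to0$ via ``at-most-linear growth of $\phi'$'' is not available, because $\phi''\leq-c$ is only a one-sided bound and $\phi'$ may diverge super-linearly (e.g.\ $\phi(x)=-x^4$). The term nevertheless vanishes: by concavity $f(t)\leq f(x)\,\rme^{\phi'(x)(t-x)}$ for $t\geq x$, whence $|G(x)|\leq f(x)\bigl((x-m)/|\phi'(x)|+\phi'(x)^{-2}\bigr)$ and so $|\phi'(x)G(x)|\leq f(x)\bigl(x-m+|\phi'(x)|^{-1}\bigr)\to0$ once $\phi'(x)<0$, with the symmetric estimate at $-\infty$. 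With that patched, the argument is complete.
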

\begin{proof}
Suppose, w.l.o.g., that the maximum of $f$ is at 0.
Under the stated assumption, there exist constants $a \geq 0$ and $b$ such that $f(x) \leq a \rme^{-c(x-b)^2}$. This implies that
$Z$ is quare integrable.
 Denote  $z \mapsto \zeta(z)=\log f(z)+cz^2/2$ which by assumption is a concave function. Let $m$ be the mean of $Z$.
\begin{multline*}
\PE[(Z-m)^2]= \int (z-m) z \rme^{\xi(z)-cz^2/2} \rmd z = \\  c^{-1} \int (z-m) \left(cz-\xi'(z)\right) \rme^{\xi(z)-cz^2/2} \rmd z  +
 c^{-1} \int (z-m) \xi'(z) \rme^{\xi(z)-cz^2/2} \rmd z.
\end{multline*}
By construction, $z \mapsto \xi'(z)$ is a non-increasing function.  Since the inequality $\PCov(\varphi(Z),\psi(Z)) \geq 0$ holds
for any two non-decreasing function $\varphi$ and $\psi$  which have  finite second moment, the second term in the RHS of the previous equation is
negative. Since $ \left(cz-\xi'(z)\right) \rme^{\xi(z)-cz^2/2} = -f'(z)$, the proof follows by integration by part:
\[
\PVar(Z) \leq -c^{-1} \int (z-m) f'(z) \rmd z = c^{-1} \int f(z) \rmd z = c^{-1} \eqsp.
\]
\end{proof}

\section{Proofs}
\label{sec:proof-strong-small}
\begin{proof}[Proof of Proposition \ref{prop:strong-small-set}]
The proof is similar to the one done in \cite{douc:moulines:ryden:2004}.
For $x \in C$, the condition, \eqref{eq:strong-small-set} implies that
\[
\sigma_-(C) \nu_C(dx') \leq \frac{d Q(x,\cdot)}{d \nu_C}(dx') \leq \sigma_+(C) \nu_C(dx') \eqsp.
\]
Plugging the lower and upper bounds in the numerator and the denominator of
\eqref{eq:smoothing:forward_kernel} yields,
\[
\FK{k}{n}(x_k,A) \geq \frac{\sigma_-}{\sigma_+} \frac{\int_A
\frac{d Q(x_k,\cdot)}{d \nu_C}(dx_{k+1})  \backvar{k+1}{n}(x_{k+1}) \mu(\rmd x_{k+1})}{\int_\Xset  \frac{d Q(x_k,\cdot)}{d \nu_C}(dx_{k+1}) \backvar{k+1}{n}(x_{k+1})
\mu(\rmd x_{k+1})}
\]
The result is established with
\[
\nu_{k|n}(A) \eqdef \frac{\int_A \frac{d Q(x_k,\cdot)}{d \nu_C}(dx_{k+1}) \backvar{k+1}{n}(x_{k+1})
\mu(\rmd x_{k+1})}{\int_\Xset \frac{d Q(x_k,\cdot)}{d \nu_C}(dx_{k+1}) \backvar{k+1}{n}(x_{k+1}) \mu(\rmd x_{k+1})} \eqsp.
\]
\end{proof}

\begin{proof}[Proof of proposition \ref{prop:uniform-accessibility}]
For any $x_i \in \Xset$,
\begin{align*}
& \CPP{X_{i+\ell} \in C}{X_i=x_i, \chunk{Y}{1}{n}} \\
&= \frac{\idotsint \W{\chunk{Y}{i+1}{i+\ell}}{x_i,x_{i+\ell+1}}{C}  \backvar{i+\ell+1}{n}(x_{i+\ell+1}) \mu(\rmd \chunk{x}{i+1}{i+\ell+1})}
{\idotsint \W{\chunk{Y}{i+1}{i+\ell}}{x_i,x_{i+\ell+1}}{\Xset} \backvar{i+\ell+1}{n}(x_{i+\ell+1}) \mu(\rmd \chunk{x}{i+1}{i+\ell+1})} \eqsp,
\\
&= \frac{\idotsint \frac{\W{\chunk{Y}{i+1}{i+\ell}}{x_i,x_{i+\ell+1}}{C}}{\W{\chunk{Y}{i+1}{i+\ell}}{x_i,x_{i+\ell+1}}{\Xset}} \W{\chunk{Y}{i+1}{i+\ell}}{x_i,x_{i+\ell+1}}{\Xset}
\backvar{i+\ell+1}{n}(x_{i+\ell+1})
\mu(\rmd x_{i+\ell+1})} {\idotsint \W{\chunk{Y}{i+1}{i+\ell}}{x_i,x_{i+\ell+1}}{\Xset} \backvar{i+\ell+1}{n}(x_{i+\ell+1}) \mu(\rmd x_{i+\ell+1})},
\end{align*}
where  $W$ is defined in \eqref{eq:definition-W}. The proof is concluded by noting that,
under the stated assumptions,
\[
\sup_{(x_i,x_{i+\ell+1}) \in \Xset \times \Xset} \frac{\W{\chunk{Y}{i+1}{i+\ell}}{x_i,x_{i+\ell+1}}{C}}{\W{\chunk{Y}{i+1}{i+\ell}}{x_i,x_{i+\ell+1}}{\Xset}} \geq \alpha(\chunk{Y}{i+1}{i+\ell};C) \eqsp,
\]
\end{proof}

\subsection{Proof of Theorem \ref{theo:coupling-drift}}
\label{sec:proof:theo:coupling-drift}
\begin{proof}
For notational simplicity, we drop the dependence  in the sample size $n$.
Denote $ N_{n} \eqdef \sum_{j=0}^n \1_{\bar{C}_j} (X_j,X'_j)$ and $\eps_i \eqdef \eps(\bar{C}_i)$.
For any $m \in \{1, \dotsc, n+1 \}$, we have:
\begin{multline}
\label{eq:key-coupling}
\PP^Y_{\Xinit,\Xinit',0} \left(T \geq n\right) \\
\quad \leq \PP^Y_{\Xinit,\Xinit',0} \left(T \geq n, N_{n-1} \geq m  \right) + \PP^Y_{\Xinit,\Xinit',0} \left(T \geq n, N_{n-1} < m \right) \eqsp.
\end{multline}
The first term on the RHS of the previous equation is the probability that we fail to couple the chains after at least $m$ independent trial.
it is bounded by
\begin{equation}
\label{eq:first-term}
\PP^Y_{\Xinit,\Xinit',0} \left(T \geq n, N_{n-1} \geq m  \right) \leq \prod_{i=1}^m \left(1-[\uparrow \eps]_{(i)} \right).
\end{equation}
where $[\uparrow \eps]_{(i)}$ are the smallest-order statistics of $(\eps_1,\dots,\eps_n)$.
We consider now the second term in the RHS of \eqref{eq:key-coupling}.
Set $B_j \eqdef 1 \vee \rho_j (1-\eps_j) \lambda^{-1}_j$.
On the event $\{ N_{n-1}\leq m-1 \}$,
\[
\prod_{j=1}^{n} B_j^{\1_{\bar{C}_j}(X_j,X'_j)} \leq \prod_{j=1}^{m-1} [\downarrow B]_{(j)} \eqsp,
\]
where $[\downarrow B]_{(j)}$ is the $j$-th largest order statistics of $B_1, \dots, B_n$.
Hence,
\[
\1 \{ N_{n-1}\leq m-1 \}  \leq \left(\prod_{j=1}^{n} B_j^{\1_{\bar{C}_j}(X_j,X'_j)} \right)^{-1} \, \prod_{j=1}^{m-1} [\downarrow B]_{(j)} \eqsp,
\]
which implies that:
\begin{equation}
\label{eq:1}
\PP^Y_{\Xinit,\Xinit',0} \left(T \geq n, N_{n-1} < m \right) \leq \prod_{j=1}^n \lambda_j \prod_{j=1}^{m_1} [\downarrow B]_{(j)}
\PE^Y_{\xi \otimes \xi' \otimes \delta_0}[M_n]
\end{equation}
where, for $k \in \{0, \dots,n \}$:
\begin{equation}
\label{eq:definitionMs}
M_k \eqdef  \left(\prod_{j=0}^{k-1} \lambda_j \right)^{-1}  \; \prod_{j=0}^{k-1} B_j^{-\1_{\bar{C}_j}(X_j,X'_j)} \; V_{k}(X_k,X'_k) \1 \{d_k = 0\} \eqsp.
\end{equation}
Since, by construction,
\begin{multline*}
\CPE[\Xinit,\Xinit',0]{V_{k+1}(X_{k+1},X'_{k+1}) \1 \{d_{k+1}=0\}}{\mathcal{F}_k} \\
(1-\eps_k) \bar{R}_k V_k(X_k,X'_k) \1_{\bar{C}^c_k}(X_k,X'_k) + \lambda_k V_k(X_k,X'_k) \1_{\bar{C}_k}(X_k,X'_k) \eqsp,
\end{multline*}
it is easily shown that $( M_k, k \geq 0)$ is a $(\mathcal{F},\PP^Y_{\Xinit,\Xinit',0})$-supermartingale w.r.t.\
where $ \mcf \eqdef ( \mcf_{k} )_{1 \leq k \leq n}$ with for $k \geq 0$,
$\mcf_k \eqdef \sigma\left[ (X_j,X'_j,d_j) , 0 \leq j \leq k  \right]$. Therefore,
\[
\PE^Y_{\Xinit,\Xinit',0}(M_n) \leq \PE^Y_{\Xinit,\Xinit',0}(M_0) = \Xinit \otimes \Xinit' (V_0) \eqsp.
\]
This establishes \eqref{eq:coupling-drift-1} and concludes the proof.

\end{proof}

\section{Proofs of Section \ref{sec:FAR}}
To simplify the notations, the dependence of $C(y)$ in $K$ is implicit throughout the section.
\label{sec:proof:FAR}
\begin{proof}[Proof of Lemma \ref{lem:FAR}]
Consider first the case  $d(x_1,a(C)) \geq M$. For any $z_1 \in a(C)$,
\begin{align*}
&M \leq |x_1 -a(x_0)| \leq |x_1-z_1| + |z_1-a(x_0)| \leq \diam(C) + |x_1-z_1| \eqsp, \\
&M \leq |x_1-z_1| \leq |x_1-a(x_0)| + |z_1-a(x_0)| \leq \diam(C) + |x_1-a(x_0)| \eqsp.
\end{align*}
Using that $p_U$ is non-increasing for $u \geq M$ and \eqref{eq:condition-noise}, we obtain
\[
p_U(|x_1-a(x_0)|) \geq p_U(\diam(C) + |x_1-z_1|) \geq \gamma p_U(\diam(C)) p_U(|x_1-z_1|) \eqsp,
\]
and similarly,
\[
p_U(|x_1-z_1|) \geq \gamma p_U(\diam(C)) p_U(|x_1-a(x_0)|) \eqsp,
\]
which establishes that \eqref{eq:strong-small-set-FAR} holds when $d(x_0,a(C)) \geq M$.

Consider now the case $d(x_1,a(C)) \leq M$. Since $x_0$ belongs to $C$, then $|x_1-a(x_0)| \leq M + \diam(C)$, which implies that
$$
\inf_{u \leq M + \diam(C)} p_U(u) \leq p_U(|x_1-a(x_0)|) \leq \sup_{u \leq M + \diam(C)} p_U(u) \eqsp,
$$
\eqref{eq:strong-small-set-FAR} holds  for $d(x_1,a(C)) \leq M$.

Consider now the second assertion.
Assume first that $x_0$ is such that $d(a(x_0),C) \geq M$ and let $z_1$ be an arbitrary point of $C$. Then, for any $x_1 \in C$,
\[
M \leq |x_1- a(x_0)| \leq |x_1 - z_1| + |z_1-a(x_0)| \leq \diam(C) + |z_1-a(x_0)| \eqsp.
\]
Using that $p_U$ is monotone decreasing on $[M,\infty)$ and \eqref{eq:condition-noise},
\begin{multline}
\label{eq:a(x0)farfromC}
p_U(|x_1-a(x_0)|) \geq p_U(\diam(C) + |z_1-a(x_0)|) \\
\geq \gamma p_U[\diam(C)] p_U(|z_1-a(x_0)|) \eqsp.
\end{multline}
If $d(a(x_0),C) \leq M$, then for any $x_1 \in C$, $|x_1-a(x_0)| \leq \diam(C) + M$, so that
\begin{equation}
\label{eq:a(x0)closedfromC}
\inf_{|u| \leq \diam(C)+M} p_U  \leq p_U\left(|x_1-a(x_0)|\right) \eqsp.
\end{equation}
\end{proof}

\begin{proof}[Proof of Lemma \ref{lem:borne-inf-uniforme}]
Choose $K$ such that $b_1^{-1} K \geq M$. If $|b^{-1}(y)-x| \geq K$, then,
\begin{equation}
\label{eq:util:FAR:item1}
|y-b(x)| = |b (b^{-1}(y)) - b(x)| \geq b_1^{-1}|b^{-1}(y)-x| \geq M \eqsp,
\end{equation}
and since $p_V$ is non-increasing on the interval $[M,\infty[$, the following inequality holds
\begin{multline*}
\int_{|x-b^{-1}(y)| \geq K} p_V(|y-b(x)|) \rmd x \leq \int_{|x-b^{-1}(y)|\geq M} p_V(b_1^{-1}|b^{-1}(y)-x|) \rmd x \\
\leq b_1 \int_{b_1^{-1} K}^\infty p_V( x) \rmd x \eqsp.
\end{multline*}
Since the Jacobian of $b$ is bounded, $\int p_V(|y-b(x)|) \rmd x$ is bounded away from zero by change of variables. The proof follows.

\end{proof}

\begin{proof}[Proof of Lemma \ref{lem:util:FAR}]
We will establish the results by considering independently the following cases:
\begin{enumerate}
\item \label{lem:util:FAR:item1}
For any $y$ and any $(x_0,x_2)$ such that  $d(a(x_0),C(y)) \leq M$ and  $d(x_2,a[C(y)]) \leq M$,
\[
I(x_0,x_2;y) \leq \left(\sup_{\rset} p_U\right)^2  \eqsp.
\]
\item \label{lem:util:FAR:item2}
For any $y$ and any $(x_0,x_2)$ such that $d(a(x_0),C(y)) > M$ and  $d(x_2,a[C(y)]) \leq M$,
\[
I(x_0,x_2;y) \leq \gamma^{-1} \; \left(\sup p_U\right)   \int_K^\infty [p_U(x)]^{-1} p_V(b_- x) dx \eqsp.
\]
\item \label{lem:util:FAR:item3}
For any $y$ and any $(x_0,x_2)$ such that $d(a(x_0),C(y)) \leq M$ and  $d(x_2,a[C(y)]) > M$
\[
I(x_0,x_2;y) \leq \gamma^{-1} \left( \sup p_U \right) \left\{ b_-^{-1} +   \int_{K}^\infty p_V(b_-  x) [p_U(a_+ x)]^{-1} \rmd x \right\}
\]
\item \label{lem:util:FAR:item4}
For any $y$ and any $(x_0,x_2)$ such that $d(a(x_0),C(y)) > M$ and  $d(x_2,a[C(y)]) > M$,
\begin{multline*}
I(x_0,x_2;y) \leq \gamma^{-2}   \\ \times
\int_{K}^{\infty} [p_U(x)]^{-1} p_V(b_- x) \left\{ \left( \inf_{u \leq M} p_U(u)  \right)^{-1}  +   [p_U(a_+ x)]^{-1} \right\}
\rmd x \eqsp.
\end{multline*}
\end{enumerate}
\begin{proof}[Proof of Assertion \ref{lem:util:FAR:item1}]
On the set $\{x_0, d(a(x_0),C(y)) \leq M \}$, $k_{C(y)}(x_0) \equiv 1$;
On the set  $\{ x_2, d(x_2,a[C(y)]) \leq M\}$, $h_{C(y)}(x_2) \equiv 1$. Since $p_U$ is uniformly bounded,
the bound follows from Lemma \ref{lem:borne-inf-uniforme} and the choice of $K$.
\end{proof}
\begin{proof}[Proof of Assertion \ref{lem:util:FAR:item2}]
On the set $\{x_0, d(a(x_0),C(y)) > M\}$,  $k_C(x_0) = p_U(|b^{-1}(y) -a(x_0)|)$ ;
On the set $\{x_2, d(x_2,a[C(y)]) \leq M\}$, $h_C(x_2) \equiv 1$.
Therefore, for such $(x_0,x_2)$,
\begin{multline}
\label{eq:boundI2}
I(x_0,x_2;y) \leq \left( \sup p_U \right) \\ p_U^{-1}(|b^{-1}(y) - a(x_0)|) \int_{C^c(y)} p_U(|x_1-a(x_0)|) p_V(|y_1-b(x_1)|) \rmd x_1 \eqsp.
\end{multline}
We set $\alpha= x_1- a(x_0)$ and $\beta= b^{-1}(y) -x_1$. Note that $|\alpha + \beta|= |b^{-1}(y)-a(x_0)| \geq d(a(x_0),C(y)) > M$.
Since $p_U$ is non-increasing on $[M,\infty[$, $p_U(|\alpha+\beta|) \geq p_U(|\alpha|+|\beta|)$, and
the condition \eqref{eq:condition-noise} shows that $\left(p_U(|\alpha+\beta|)\right)^{-1} p_U(|\alpha|) \leq \gamma^{-1} p_U^{-1}(|\beta|)$
which implies
\begin{equation}
\label{eq:util:FAR:item2:cond}
p_U^{-1}(|b^{-1}(y)-a(x_0)|) p_U(|x_1-a(x_0)|) \leq \gamma^{-1} p_U^{-1}(|b^{-1}(y)-x_1|) \eqsp.
\end{equation}
Therefore, plugging \eqref{eq:util:FAR:item2:cond} into the RHS of \eqref{eq:boundI2} yields
\begin{align*}
I(x_0,x_2;y) &\leq \gamma^{-1} \left( \sup p_U \right) \int_{|x_1-b^{-1}(y)|\geq K} p_U^{-1}(|b^{-1}(y)-x_1|) p_V(b_-|b^{-1}(y)-x|) \rmd x_1 \\
&\leq \gamma^{-1} \left( \sup p_U \right)  \int_K^\infty p_U^{-1}(x) p_V(b_- x) \rmd x  \eqsp.
\end{align*}

\end{proof}
\begin{proof}[Proof of Assertion \ref{lem:util:FAR:item3}]
On the set $\{x_0, d(a(x_0),C(y)) \leq M\}$,  $k_C(x_0) \equiv 1$;
on the set $\{x_2, d(x_2,a[C(y)]) > M\}$, $h_C(x_2)= p_U(|x_2-a[b^{-1}(y)]|) \equiv 1$.
Therefore, for such $(x_0,x_2)$;
\begin{multline}
\label{eq:boundI3}
I(x_0,x_2;y) \leq \left( \sup p_U \right) \\ \times
p_U^{-1}(|x_2-a[b^{-1}(y)]|) \int_{C^c(y)} p_V(|y-b(x_1)|) p_U(|x_2-a(x_1)|) \rmd x_1 \eqsp.
\end{multline}
We set $\alpha= x_2- a(x_1)$, $\beta= a(x_1) - a[b^{-1}(y)]$. Since $|\alpha+\beta| \geq d(x_2,a[C(y)]) > M$, using as above that
$\left(p_U(|\alpha+\beta|)\right)^{-1} p_U(|\alpha|) \leq \gamma^{-1} p_U^{-1}(|\beta|)$, we show
\begin{equation}
\label{eq:util:FAR:item3:cond}
p_U^{-1}(|x_2-a[b^{-1}(y)]|) p_U(|x_2-a(x_1)|) \leq \gamma^{-1} p_U^{-1}(|a(x_1) - a[b^{-1}(y)]|) \eqsp.
\end{equation}
Since for any $x,x' \in \Xset$,
\begin{multline}
\label{eq:util:FAR:item3:cond1}
p_U^{-1}(|a(x) - a(x')|) \leq \left(\inf_{u \leq M} p_U(u) \right)^{-1} \1 \{ |a(x) - a(x')| \leq M \} \\
+  p_U^{-1}(a_+|x-x'|) \1 \{ |a(x) -a(x')| > M \} \eqsp,
\end{multline}
the RHS of \eqref{eq:boundI3} is therefore bounded by
\begin{multline*}
I(x_0,x_2;y) \leq \gamma^{-1} \left( \sup p_U \right) \int_{|x_1-b^{-1}(y)| \geq K} p_V(b_-(|x_1 -b^{-1}(y)|)) \\
\left\{ \left( \inf_{u \leq M} p_U(u)  \right)^{-1}  + p_U^{-1}(a_+ |x_1-b^{-1}(y)|) \right\} \rmd x_1 \eqsp.
\end{multline*}
\end{proof}
\begin{proof}[Proof of Assertion \ref{lem:util:FAR:item4}]
On the set $\{x_0, d(a(x_0),C(y)) > M \}$, $k_{C(y)}(x_0) = p_U(|b^{-1}(y)-a(x_0)|)$. On the
set $\{x_2, d(x_2,a[C(y)]) > M \}$, $k_{C(y)}(x_2) = p_U(|x_2-a[b^{-1}(y)])$. Therefore, for such $(x_0,x_2)$,
\begin{multline}
\label{eq:boundI4}
I(x_0,x_2;y) \leq p_U^{-1}( |b^{-1}(y)-a(x_0)|) p_U^{-1}(|x_2-a[b^{-1}(y)]|) \\
\times \int_{C^c(y)} p_U(|x_1-a(x_0)|) p_V(|y-b(x_1)|) p_U(|x_2-a(x_1)|) \rmd x_1 \eqsp.
\end{multline}
Using \eqref{eq:util:FAR:item1}, \eqref{eq:util:FAR:item2:cond}, \eqref{eq:util:FAR:item3:cond}, and \eqref{eq:util:FAR:item3:cond1},
the RHS of the previous equation is bounded by
\[
I(x_0,x_2; y) \leq \gamma^{-2} \int_K^{\infty} p_U^{-1}(|x|) p_V(b_-|x|) \left\{ \left( \inf_{u \leq M} p_U(u)  \right)^{-1}  + p_U^{-1}(a_+ x) \right\}
\rmd x \eqsp.
\]
The proof follows.
\end{proof}
\end{proof}

%


\end{document}